\documentclass[12pt, reqno]{amsart}

\numberwithin{equation}{section}

\makeatletter
\renewcommand{\@biblabel}[1]{#1\hfill \hspace{0 cm}}
\makeatother

\usepackage{amsmath, amssymb, amsthm, amsfonts, cite, dsfont, enumerate, epsfig, float, geometry, doi,
infwarerr, mathrsfs, mathtools, mathrsfs, mathtools, microtype, relsize, stmaryrd, tabularx, txfonts,
nicefrac, subfig}
\usepackage[normalem]{ulem}

\newtheorem{theorem}{Theorem}[section]
\newtheorem{lemma}[theorem]{Lemma}

\newtheorem{corollary}[theorem]{Corollary}

\theoremstyle{definition}
\newtheorem{definition}[theorem]{Definition}

\newtheorem{remark}[theorem]{Remark}
\numberwithin{equation}{section}

\newcommand{\naturals}{\ensuremath{\mathbb{N}}}

\newcommand{\Reals}{\ensuremath{\mathbb{R}}}

\newcommand{\set}{\ensuremath{\mathcal}}
\newcommand{\Field}{\ensuremath{\mathbb{F}}}

\newcommand{\OneTo}[1]{[#1]}
\newcommand{\card}[1]{|#1|}

\newcommand{\Tr}{\mathrm{Tr}}

\DeclareMathOperator{\Vertex}{\mathsf{V}}
\DeclareMathOperator{\Edge}{\mathsf{E}}
\DeclareMathOperator{\Adjacency}{\mathbf{A}}

\DeclareMathOperator{\AllOne}{\mathbf{J}}
\DeclareMathOperator{\Identity}{\mathbf{I}}

\DeclareMathOperator{\Zero}{\mathbf{0}}

\DeclareMathOperator{\Clique}{\omega}
\DeclareMathOperator{\Chromatic}{\chi}

\newcommand{\Gr}[1]{\mathsf{#1}}                          
\newcommand{\CGr}[1]{\overline{\mathsf{#1}}}              
\newcommand{\V}[1]{\Vertex(#1)}                           
\newcommand{\E}[1]{\Edge(#1)}                             
\newcommand{\Ei}{\mathbf{E}}                              
\newcommand{\A}{\Adjacency}                               
\newcommand{\I}[1]{\Identity_{#1}}                        

\newcommand{\indnum}[1]{\alpha(#1)}                       

\newcommand{\clnum}[1]{\Clique(#1)}                       

\newcommand{\chrnum}[1]{\Chromatic(#1)}                   

\newcommand{\fchrnum}[1]{\Chromatic_{\mathrm{f}}(#1)}     

\newcommand{\Eigval}[2]{\lambda_{#1}(#2)}                 

\newcommand{\wt}[1]{w_H(\mathbf{#1})} 
\newcommand{\inner}[2]{\left\langle #1 \, , \, #2 \right\rangle} 
\newcommand{\boseMesner}{\mathcal{B}}
\newcommand{\parity}{\mathrm{par}}
\newcommand{\Dh}{\mathrm{d}_H}
\newcommand{\DG}{\mathrm{d}_{\Gr{G}}}

\newcommand{\Cayley}[2]{\mathrm{Cay}\left({#1},{#2}\right)} 
\DeclareMathOperator{\Aut}{\mathrm{Aut}} 

\newcommand{\Gilbert}[1]{\mathcal{G}_{#1}} 

\DeclareMathOperator{\Neighbors}{\set{N}}

\newcommand{\trace}[1]{\text{Tr}{(#1)}}

\MHInternalSyntaxOn
\renewcommand{\dcases}
{
\MT_start_cases:nnnn
{\quad}
{$\m@th\displaystyle##$\hfil}
{$\m@th\displaystyle##$\hfil}
{\lbrace}
}
\MHInternalSyntaxOff

\makeatletter\c@MaxMatrixCols=15\makeatother

\title{On the transitivity of Gilbert graphs and their complements}

\author[N. Krupnik]{Noam Krupnik}
\address{N. Krupnik, Faculty of Computer Science,
Technion--Israel Institute of Technology,
Haifa 3200003, Israel. E-mail: noam.krupnik@campus.technion.ac.il}

\author[I. Sason]{Igal Sason}
\address{I. Sason, The Viterbi Faculty of Electrical and Computer Engineering,
Technion--Israel Institute of Technology,
Haifa 3200003, Israel;
Faculty of Mathematics,
Technion--Israel Institute of Technology,
Haifa 3200003, Israel. E-mail: eeigal@technion.ac.il}

\author[A. Berman]{Abraham Berman}
\address{A. Berman, Faculty of Mathematics,
Technion--Israel Institute of Technology,
Haifa 3200003, Israel. E-mail: berman@technion.ac.il}

\begin{document}

\date{\today}

\vspace*{0.8cm}
\begin{abstract}
The Gilbert graph $\Gilbert{q,n,d}$, which arises naturally in graph theory and coding theory,
is the regular graph on $\mathbb{F}_q^n$ in which two vertices are adjacent if their Hamming distance
is less than $d$, and it is vertex-transitive. We classify all parameters $(q,n,d)$ for which
$\Gilbert{q,n,d}$ is edge-transitive or distance-transitive, and separately classify all parameters
for which its complement has these properties. We prove that $\Gilbert{q,n,d}$ is edge-transitive
if and only if it is distance-transitive, and that this occurs precisely when $d=2$, $(q,d)=(2,3)$,
or $(q,d)=(2,n)$. For the complement graphs, we determine all parameters yielding edge- or
distance-transitivity using spectral methods based on Krawtchouk polynomials and the structure of
the Hamming association scheme. In contrast to the Gilbert graphs, where the parameter sets corresponding
to edge- and distance-transitivity coincide, we show that for their complements the set of parameters
yielding distance-transitivity is strictly contained in the set yielding edge-transitivity. As an
application, we compute the exact values of the Lov\'{a}sz $\vartheta$-function of Gilbert graphs,
as well as of their complements, in all cases where either one of them is edge-transitive.
\end{abstract}

\maketitle
\thispagestyle{empty}

\medskip
\noindent {\bf Keywords.}
Gilbert graphs, Cayley graphs, graph complement, association schemes, edge transitivity, distance transitivity,
vertex transitivity, Krawtchouk polynomials, Lov\'{a}sz $\vartheta$-function.

\vspace*{0.2cm}
\noindent {\bf 2020 Mathematics Subject Classification.} 05C25, 05C50, 05E30.

\maketitle

\section{Introduction}
\label{section: Introduction}

Let $q\in \naturals$ be a prime power, and let $n,d \in \naturals$ with $2 \le d \le n$.
The term \emph{Gilbert graph} is used in this paper to denote the graph with vertex set $\mathbb{F}_q^n$
in which two distinct vertices are adjacent if their Hamming distance is less than $d$.
This graph, denoted by $\Gilbert{q,n,d}$, arises in information theory in connection with both the classical
Gilbert--Varshamov bound and the Shannon capacity of graphs (see \cite{JiangVardy2004, Sason2025, McElieceRR1978, Tolhuizen1997, YeZLWYMa2023}).
The term \emph{Gilbert graph} is also used in the literature to describe several different objects: in probabilistic
combinatorics, it often refers to the random graph $G(n,p)$ from the Erd\H{o}s--R\'{e}nyi ensemble (independently introduced by Gilbert),
while in geometric probability, it may denote the random geometric graph obtained by connecting points whose Euclidean distance is at most
a prescribed threshold. To avoid ambiguity, throughout this paper, the term \emph{Gilbert graph} will always refer to the distance-threshold
graph on $\mathbb{F}_q^n$, as defined above. In the literature, the term \emph{generalized-Hamming graph} is also used to refer to this graph 
or to closely related variants \cite{AffifChaoucheB2006, LiZW2023, QianWX2022, WangCLL2026}.

In \cite{Sason2025}, the Gilbert graph $\Gilbert{2,5,3}$ was used to show that Schrijver's $\vartheta$-function need not be an upper bound
on the Shannon capacity of graphs. Relying on \cite{Sason2024}, the selection of that graph as a potential example was justified
by the fact that the graph and its complement are both vertex-transitive, the graph is edge-transitive, whereas its complement
is not (had the graph and its complement both been vertex- and edge-transitive, it could not serve as an example \cite{Sason2024}).
This observation motivates the present paper, whose goal is to classify the parameters $q, n, d$ for which either the Gilbert graph or its complement
is edge-transitive. We show that in all cases where the Gilbert graph is edge-transitive, it is even distance-transitive (recall that
distance-transitivity implies edge-transitivity, but the opposite is not necessarily true). The paper also classifies the parameters $q, n, d$
for which the complement of the Gilbert graph, denoted by $\overline{\Gilbert{q,n,d}}$, is edge-transitive, and those for which
it is distance-transitive. The proof of the latter classification relies on spectral methods in graph theory and the theory of association
schemes. To the best of our knowledge, a complete classification of the parameters $q, n, d$ for which Gilbert graphs or their complements are
edge-transitive or distance-transitive has not previously appeared in the literature.

The main results in this paper are as follows.
\begin{theorem}
\label{thm: main result}
Let $q \in \naturals$ be a prime power and let $n,d \in \naturals$ with $2 \leq d \le n$. Set 
$\Gr{G} = \Gilbert{q,n,d}$. Then, the following statements are equivalent:
\begin{enumerate}
\item $\Gr{G}$ is edge-transitive.
\item $\Gr{G}$ is distance-transitive.
\item Either $d = 2$, or $(q,d) = (2,3)$, or $(q,d) = (2,n)$.
\end{enumerate}
\end{theorem}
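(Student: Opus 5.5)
We will prove (2)$\Rightarrow$(1) (immediate, since distance-transitivity implies edge-transitivity), then (3)$\Rightarrow$(2), and finally (1)$\Rightarrow$(3), which is the substantial direction.

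\emph{Step 1: (3)$\Rightarrow$(2).} We treat the three cases separately.
\begin{itemize}
\item If $d=2$, then $\Gilbert{q,n,2}$ is the Hamming graph $H(n,q)$. Its automorphism group is $S_q \wr S_n$, which is distance-transitive.
\item If $(q,d)=(2,n)$, then two vertices are nonadjacent exactly when they are at Hamming distance $n$, i.e., antipodal. Hence $\Gilbert{2,n,n}$ is the complete multipartite graph $K_{2,2,\dots,2}$ with $2^{n-1}$ parts of size~2. It has diameter~2 and is distance-transitive, since its automorphism group acts transitively on ordered pairs of adjacent vertices and on ordered antipodal pairs.
\item If $(q,d)=(2,3)$, then $\Gilbert{2,n,3}$ is the distance-$\{1,2\}$ graph of the $n$-cube, i.e., the square of $Q_n$. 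The group of translations and coordinate permutations acts transitively on pairs at each Hamming distance. In this graph, graph distance $k$ corresponds to Hamming distance in $\{2k-1,2k\}$. We will therefore use the extra automorphisms of the halved/folded structure: the map $x\mapsto (x, \parity(x))$ embeds $Q_n^2$ into the halved $(n+1)$-cube, and $\Gilbert{2,n,3}\cong \tfrac12 Q_{n+1}$. The halved cube is distance-transitive, so we only need to verify this isomorphism: two even-weight words of length $n+1$ are at distance $2$ iff their first $n$ coordinates differ in $1$ or $2$ places. This settles the case.
\end{itemize}

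\emph{Step 2: (1)$\Rightarrow$(3).} Suppose $q\ge 3$ and $d\ge 3$, or $q=2$ and $4\le d\le n-1$. We must show that some automorphism-invariant property separates edges at Hamming distance $1$ from edges at Hamming distance $d-1$. The natural invariant is the number of common neighbours $\mu(x,y)=|B(x,d-1)\cap B(y,d-1)|-2$, where $B$ denotes the Hamming ball. This quantity depends only on $i=\Dh(x,y)$ and is an explicit sum of products of binomial coefficients and powers of $q-1$ (the intersection numbers of the Hamming scheme). Edge-transitivity forces $\mu$ to be constant for $i=1,\dots,d-1$, so it suffices to show $\mu(1)>\mu(d-1)$. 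Intuitively, ball intersections shrink as the centres move apart. Concretely, we plan to show that $f(i)=|B(0,r)\cap B(x_i,r)|$ with $r=d-1$ is strictly decreasing in $i$ on $1\le i\le r$, except in the degenerate situations of (3). To do this, we will compare $f(i)$ and $f(i+1)$ by changing one coordinate: the words that are gained versus lost form sets that can be counted explicitly. The difference $f(i)-f(i+1)$ counts words $z$ that lie at distance exactly $r$ from $x_i$ and at distance $\le r$ from $0$, minus a symmetric count. When $q\ge3$ the difference is positive as soon as $r\ge2$, since the third symbol gives additional room. When $q=2$, the difference telescopes to a binomial expression that vanishes only when $r=2$ or $r=n-1$. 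These exceptional cases correspond exactly to $(q,d)=(2,3)$ and $(q,d)=(2,n)$.

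The main obstacle is this monotonicity computation, specifically pinning down exactly when $\mu(1)=\mu(d-1)$. If a direct comparison becomes messy, we will instead compare only $i=1$ with $i=2$, using a second invariant if necessary (such as the number of edges within the common neighbourhood). Alternatively, we can exploit the fact that an edge-transitive graph has all edges lying in the same number of triangles, and compute $\mu(1)-\mu(2)$ explicitly: for $q=2$ it equals $2\binom{n-2}{r-1}-2\binom{n-2}{r-2}$-type terms, which can be checked to vanish precisely in the listed cases.
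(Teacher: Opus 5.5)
Your Step~1 is correct. The identification $\mathcal{G}_{2,n,3}\cong\tfrac12 Q_{n+1}$ via $\mathbf{x}\mapsto(\mathbf{x},\mathrm{par}(\mathbf{x}))$ is a nice self-contained replacement for the citation the paper uses. Your Step~2 invariant is the paper's in disguise: the graph is regular, so $|\mathcal{N}(\mathbf{v})\setminus\mathcal{N}(\mathbf{0})|=\Delta-\mu(\mathbf{0},\mathbf{v})$, and the paper likewise compares common-neighbour counts of edges $\{\mathbf{0},\mathbf{v}\}$ for a few weights of $\mathbf{v}$.

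For $q\ge 3$ your plan works if you compare weights $1$ and $2$. With $r=d-1$ and $f(i)=|B(\mathbf{0},r)\cap B(\mathbf{x}_i,r)|$, one gets $f(1)-f(2)=(q-2)(q-1)^{d-2}\binom{n-2}{d-2}>0$. The surplus comes exactly from words beginning with $(1,a)$ or $(a,1)$, $a\notin\{0,1\}$, which is your ``third symbol''. (The paper's count of $F(\mathbf{u}_2)$ omits these words, so its difference has the opposite sign; it is still nonzero, and its conclusion survives.)

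The genuine gap is the binary case $4\le d\le n-1$, where your intended monotonicity is false. Note that the formula above already vanishes at $q=2$. A direct count gives
\[
f(1)=2\sum_{b\le r-1}\binom{n-1}{b},\qquad f(2)=2\sum_{b\le r-2}\binom{n-2}{b}+2\sum_{b\le r-1}\binom{n-2}{b},
\]
and these coincide by Pascal's rule for \emph{every} $n$ and $r$. So for $q=2$, edges of Hamming length $1$ and $2$ always lie in the same number of triangles. This has several consequences:
\begin{itemize}
\item $f$ is not strictly decreasing.
\item The one-step difference does not ``vanish only when $r=2$ or $r=n-1$''.
\item Your fallback comparison of $\mu(1)$ with $\mu(2)$ yields $0=0$ and excludes no binary parameters at all.
\item The expression $2\binom{n-2}{r-1}-2\binom{n-2}{r-2}$ you anticipate is not what the computation gives.
\end{itemize}
The target inequality $\mu(1)>\mu(d-1)$ is in fact true, but nothing in your sketch establishes it. The missing step is to compare weight $1$ with weight $3$. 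Since $d\ge 4$, $\mathbf{u}_3=(1,1,1,0,\dots,0)$ is a neighbour of $\mathbf{0}$, and a count along the same lines gives $f(1)-f(3)=2\binom{n-3}{d-2}$, which is nonzero precisely when $d\le n-1$. This is exactly the second half of the paper's argument, and it is what your proof needs in place of the monotonicity claim.
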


\begin{theorem}
\label{thm: main result complement}
Let $q \in \naturals$ be a prime power and let $n,d \in \naturals$ with $2 \leq d \le n$. Set $\Gr{G} = \overline{\Gilbert{q,n,d}}$. Then,
\begin{enumerate}
\item $\Gr{G}$ is edge-transitive if and only if either $(q,d)=(2,n-1)$ for an even $n$, or $d = n$.
\item $\Gr{G}$ is distance-transitive if and only if one of the following two conditions holds:
\begin{enumerate}
\item $(q,d)=(2,n-1)$ for an even $n$,
\item $d = n$ and either $q=2$ or $n=2$.
\end{enumerate}
\end{enumerate}
\end{theorem}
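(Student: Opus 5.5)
The plan is to treat $\Gr{G}=\overline{\Gilbert{q,n,d}}$ as the Cayley graph $\Cayley{\Field_q^n}{S}$ with $S=\{x : \wt{x}\ge d\}$, the union of the classes $d,\dots,n$ of the Hamming association scheme. Since $x\mapsto -x$ is an automorphism fixing $\mathbf{0}$ and $S=-S$, edge-transitivity is equivalent to arc-transitivity. Hence it is equivalent to the stabilizer $\Aut(\Gr{G})_{\mathbf{0}}$ acting transitively on $S$. I would prove sufficiency by exhibiting automorphisms, and necessity by finding a combinatorial invariant that separates the Hamming weights inside $S$.

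\emph{Sufficiency.} If $d=n$, the monomial automorphisms of the Hamming scheme (coordinate permutations together with independent permutations of $\Field_q$ in each coordinate) preserve Hamming distance. They fix $\mathbf{0}$ and act transitively on the words of full weight, so $\Gr{G}$ is arc-transitive.

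If $q=2$, $d=n-1$ and $n$ is even, I would use the map $\phi(x)=x+\parity(x)\mathbf{1}$, which complements the odd-weight words. Pairs of equal parity keep their distance, while pairs of different parity go from distance $t$ to distance $n-t$. Since $n$ is even, distance-$n$ pairs have equal parity, and distance-$(n-1)$ pairs have different parity and become distance-$1$ pairs. Thus $\Gr{G}\cong\Cayley{\Field_2^n}{\{\text{weights }1,n\}}$, which is the folded $(n+1)$-cube, a known distance-transitive graph.

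For distance-transitivity when $d=n$: if $q=2$, then $\Gr{G}$ is a perfect matching, which is trivially distance-transitive. If $n=2$, then $\Gr{G}\cong K_q\times K_q$, the complement of the rook graph $K_q\square K_q$, which is distance-transitive (in fact a rank-3 graph).

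\emph{Necessity of (1).} For $x\in S$ with $\wt{x}=w$, the number of common neighbours of $\mathbf{0}$ and $x$ is
\[
\lambda_w=\sum_{i,j\ge d} p^{w}_{ij},
\]
where the $p^w_{ij}$ are the intersection numbers of the Hamming scheme $H(n,q)$. Arc-transitivity forces $\lambda_w$ to be constant for $w\in\{d,\dots,n\}$. Equivalently, writing $\lambda_w$ via the Krawtchouk expansion of $(\A_d+\dots+\A_n)^2$, the coefficient of $\A_w$ must not depend on $w$ for $w\ge d$.

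Assuming $d\le n-1$, I would compare $\lambda_n$ with $\lambda_{n-1}$ by an explicit count. For $x$ of weight $w$, count the words $z$ with $\wt{z}\ge d$ and $\Dh(z,x)\ge d$ by splitting according to agreement patterns on the support of $x$ and off it. The difference $\lambda_{n-1}-\lambda_n$ then reduces to a sum of binomial terms weighted by powers of $q-1$ and $q-2$. I expect this to be nonzero unless $q=2$, $d=n-1$ and $n$ is even. Parity enters because for $q=2$ the two weights $n-1$ and $n$ interact with the parity classes differently.

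This inequality, uniform in $(q,n,d)$, is the main obstacle. I would first try to show that the difference has a fixed sign when $q\ge 3$, using the contribution from the $q-2$ ``third symbols''. For $q=2$ I would compute it directly, and if the comparison of $n-1$ with $n$ is not decisive I would use $\lambda_d$ versus $\lambda_n$ instead.

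\emph{Necessity of (2).} Distance-transitivity implies edge-transitivity, so by part (1) it only remains to rule out $d=n$ with $q\ge3$ and $n\ge3$. In that case $\Gr{G}$ is connected of diameter $2$. I would show that it is not distance-regular by comparing, for $\mathbf{0}$ and two non-adjacent vertices at Hamming distances $1$ and $n-1$, their numbers of common neighbours: these are $(q-1)^{n-1}(q-2)$ and $(q-1)(q-2)^{n-1}$ respectively. They differ when $q\ge3$ and $n\ge3$.
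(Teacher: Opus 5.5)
\textbf{The gap is in the necessity half of Part~1}, which is the core of the theorem. Your sufficiency arguments and your treatment of Part~2 are fine. Identifying $\overline{\Gilbert{2,n,n-1}}$, $n$ even, with the folded $(n+1)$-cube via $x\mapsto x+\mathrm{par}(x)\mathbf{1}$ is a neat alternative to the paper's explicit automorphism, and your common-neighbour count for $d=n$ is the paper's.

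For necessity, the gap is not just an unfinished computation: the invariant you chose is too weak. The common neighbours of $\mathbf{0}$ and $x$ with $w_H(x)=w\ge d$ are exactly the words outside $B(\mathbf{0},d-1)\cup B(x,d-1)$, so
\[
\lambda_w = q^n - 2\,\bigl|B(\mathbf{0},d-1)\bigr| + \bigl|B(\mathbf{0},d-1)\cap B(x,d-1)\bigr|,
\]
and the last term vanishes for $w\ge 2d-1$.
\begin{itemize}
\item Hence $\lambda_{n-1}=\lambda_n$ whenever $n\ge 2d$. An example is $(q,n,d)=(3,4,2)$, which is not edge-transitive. So your expectation that $\lambda_{n-1}-\lambda_n\neq 0$ outside the edge-transitive case is false.
\item Worse, for $q=2$ any two words of weight at least $d$ are at distance at most $2n-2d$. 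So when $3d>2n$ the graph is triangle-free and $\lambda_w=0$ for every $w\in\{d,\dots,n\}$.
\item This happens in non-edge-transitive cases such as $(2,7,5)$, $(2,8,6)$, and every $(2,n,n-1)$ with $n\ge 5$ odd. There, no comparison among the $\lambda_w$ can detect the failure, including your fallback $\lambda_d$ versus $\lambda_n$.
\end{itemize}

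What is missing is an automorphism invariant that sees more than $\A(\Gr{G})^2$. The paper uses the full algebra generated by $\A(\Gr{G})$.
\begin{itemize}
\item The eigenvalue $\gamma_1=-(q-1)^{d-1}\binom{n-1}{d-1}$ belonging to $\Ei_1$ is strictly below $\gamma_0$ and below $\gamma_2,\dots,\gamma_{n-1}$; the latter follows from a character-sum argument. It coincides with $\gamma_n$ only when $q=2$ and $d$ is odd.
\item Lagrange interpolation therefore expresses $\Ei_1$ (respectively $\Ei_1+\Ei_n$ in the binary odd-$d$ case) as a polynomial in $\A(\Gr{G})$. Hence it commutes with every automorphism's permutation matrix.
\item Its $(\mathbf{0},x)$ entry, with $i=w_H(x)$, is $\bigl((q-1)(n-i)-i\bigr)/q^n$, respectively $\bigl(n-2i+(-1)^i\bigr)/2^n$.
\item The first separates weight $n$ from weight $n-1$. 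The second separates weight $n$ from weight $n-2$, which is a neighbour of $\mathbf{0}$ exactly when $d<n-1$.
\end{itemize}
The binary odd-$d$ quantity is constant on each pair $\{2k-1,2k\}$. This is consistent with $(2,n,n-1)$, $n$ even, being edge-transitive.
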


\begin{corollary}  \label{corollary: Gilbert graphs and complements}
Let $\Gr{G}$ be a Gilbert graph such that $\Gr{G}$ and $\CGr{G}$ are both edge-transitive. Then, either $(q,d) = (2,n)$, $(q,d,n) = (2,3,4)$, or $(d,n)=(2,2)$.
\end{corollary}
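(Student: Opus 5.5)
The corollary follows by intersecting the parameter sets in Theorems~\ref{thm: main result} and~\ref{thm: main result complement}; no new structural argument is needed. By Theorem~\ref{thm: main result}, $\Gilbert{q,n,d}$ is edge-transitive if and only if $d=2$, or $(q,d)=(2,3)$, or $(q,d)=(2,n)$. By part~(1) of Theorem~\ref{thm: main result complement}, $\overline{\Gilbert{q,n,d}}$ is edge-transitive if and only if $(q,d)=(2,n-1)$ with $n$ even, or $d=n$. Throughout we have $2\le d\le n$. I will go through the three cases of the first theorem and intersect each with the two cases of the second.

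\emph{Case $d=2$.} The complement condition requires either $d=n$, which gives $(d,n)=(2,2)$, or $q=2$ and $d=n-1$ with $n$ even. The latter forces $n=3$, which is odd, so it is impossible. This case therefore gives exactly $(d,n)=(2,2)$, for any prime power $q$.

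\emph{Case $(q,d)=(2,3)$.} Either $d=n$, so $n=3$ and $(q,d)=(2,n)$; or $d=n-1$ with $n$ even, so $n=4$ and we get $(q,d,n)=(2,3,4)$.

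\emph{Case $(q,d)=(2,n)$.} Here $d=n$, so the complement is always edge-transitive and the whole family $(q,d)=(2,n)$ survives. The alternative $d=n-1$ cannot hold simultaneously with $d=n$, so it adds nothing.

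Collecting the three cases yields exactly $(q,d)=(2,n)$, $(q,d,n)=(2,3,4)$, or $(d,n)=(2,2)$, which is the claim. The only step needing care is the parity and range bookkeeping in the $d=2$ case, where $n-1=2$ forces an odd $n$. Otherwise the argument rests entirely on the two earlier classifications, which we assume.
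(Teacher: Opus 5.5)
Your proof is correct and takes the same route as the paper, which gives no separate argument and obtains the corollary by intersecting the parameter sets of Theorem~\ref{thm: main result} and Part~1 of Theorem~\ref{thm: main result complement}. Your case bookkeeping is complete: you rule out $d=2=n-1$ because $n=3$ is odd, you extract $(q,d,n)=(2,3,4)$ from $(q,d)=(2,3)$, and you note that the whole family $(q,d)=(2,n)$ survives since $d=n$.
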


The paper alternates between background material and new results.
The even-numbered sections provide necessary background on graph transitivity, association schemes, Krawtchouk polynomials, and the Lov\'{a}sz
$\vartheta$-function, while the odd-numbered sections present the proofs of our results and their applications.
An exception is Section~\ref{subsection: Spectrum of the complement of the Gilbert graph}, where we derive spectral properties
of the complement of Gilbert graphs. Since these results appear to be new, Section~\ref{subsection: Spectrum of the complement of the Gilbert graph}
contains a complete proof instead of referring to external sources.

In regard to the odd-numbered sections, Theorem~\ref{thm: main result} is proved in Section~\ref{sec: transitive gilbert graphs}, and
Theorem~\ref{thm: main result complement} is proved in Section~\ref{sec: transitive complement gilbert graphs}. The results are then
applied in Section~\ref{sec: applications} to compute the Lov\'{a}sz $\vartheta$-function of edge-transitive Gilbert graphs and their complements.

\section{Graph transitivity}
\label{section: transitivity}

All graphs in this paper are finite, simple, undirected, and connected. We denote by $\Gr{G}$ a graph with vertex set $\V{\Gr{G}}$ and edge
set $\E{\Gr{G}}$. For every vertex $v \in \V{\Gr{G}}$, we denote by $\Neighbors(v)$ the neighborhood of $v$, i.e.,
the set of vertices adjacent to $v$. The \emph{distance} between two vertices $u,v \in \V{\Gr{G}}$ is the length of the
shortest path between them and is denoted by $d_{\Gr{G}}(u,v)$. A \emph{graph automorphism} is a permutation of the vertex set that preserves
edges and non-edges. The set of all graph automorphisms of $\Gr{G}$ forms an \emph{automorphism group}, denoted by $\Aut(\Gr{G})$.
The following types of graph symmetries are considered:
\begin{enumerate}
\item $\Gr{G}$ is \emph{vertex-transitive} if, for every two vertices $u,v \in \V{\Gr{G}}$, there exists an automorphism of $\Gr{G}$ that maps $u$ to $v$.
\item $\Gr{G}$ is \emph{edge-transitive} if, for every pair of edges $\{u_1,v_1\}, \{u_2,v_2\} \in \E{\Gr{G}}$, there exists an automorphism of $\Gr{G}$
that maps $\{u_1, v_1\}$ to $\{u_2, v_2\}$.
\item $\Gr{G}$ is \emph{distance-transitive} if, for every four vertices $u_1, u_2, v_1, v_2 \in \V{\Gr{G}}$ satisfying the equality
$d_{\Gr{G}}(v_1,v_2) = d_{\Gr{G}}(u_1,u_2)$, there exists an automorphism of $\Gr{G}$ that maps $v_1$ to $u_1$ and $v_2$ to $u_2$.
\end{enumerate}
If a graph is distance-transitive, then it is also edge-transitive and vertex-transitive.
A Cayley graph of an abelian group is vertex-transitive \cite[Theorem 3.1.2]{GodsilRoyle2001}, and
the Gilbert graph is a Cayley graph of the additive group $\Field_q^n$. In particular,
\begin{align}
&\Gilbert{q,n,d} = \Cayley{\Field_q^n}{\mathcal{S}_{n,d}} \\
&\mathcal{S}_{n,d} := \left\{\mathbf{x} \in \Field_q^n \, : \, 1 \leq \wt{\mathbf{x}} \leq d-1\right\},
\end{align}
where $\wt{\mathbf{x}}$ is the Hamming weight of the vector $\mathbf{x}$, i.e., the number of nonzero coordinates of $\mathbf{x}$, and
$\mathcal{S}_{n,d}$ is called a {\em connection set}. Similarly,
the complement graph is also a Cayley graph:
\begin{align}
\overline{\Gilbert{q,n,d}} = \Cayley{\Field_q^n}{\Field_q^n \setminus (\mathcal{S}_{n,d} \cup \{\mathbf{0}\})}.
\end{align}
Thus, both the Gilbert graph and its complement are vertex-transitive, but they are not necessarily edge-transitive
(hence, they need not be distance-transitive).

\section{Transitive Gilbert graphs}
\label{sec: transitive gilbert graphs}

This section proves Theorem~\ref{thm: main result}, which fully characterizes the parameters
for which Gilbert graphs are edge-transitive or distance-transitive, and establishes that a
Gilbert graph is edge-transitive if and only if it is distance-transitive.

The Gilbert graph $\Gilbert{q,n,2}$ is also called the Hamming graph, and it
is known to be distance-transitive \cite[Table~6.1]{BrouwerCN1989}. A proof that $\Gilbert{2,n,3}$
is distance-transitive is presented in \cite{Mirafzal2021}. The following lemma completes the proof
of one direction of Theorem~\ref{thm: main result}.

\begin{lemma}
\label{lem: gilbert (2,n,n) is distance-transitive}
The Gilbert graph $\Gilbert{2,n,n}$ is distance-transitive.
\end{lemma}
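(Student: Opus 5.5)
Our plan is to determine the structure of $\Gilbert{2,n,n}$ explicitly and then show that its automorphism group acts transitively on ordered pairs at each distance. In $\Gilbert{2,n,n}$, two distinct vertices $\mathbf{x},\mathbf{y}\in\Field_2^n$ are adjacent if and only if $1\le \Dh(\mathbf{x},\mathbf{y})\le n-1$, that is, unless $\mathbf{y}=\mathbf{x}+\mathbf{1}$, where $\mathbf{1}$ is the all-ones vector. So the graph is the complete graph $K_{2^n}$ with the perfect matching $\{\mathbf{x},\mathbf{x}+\mathbf{1}\}$ removed, i.e., the cocktail-party graph $K_{2^{n-1}\times 2}$. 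For $n\ge 2$ this graph has diameter $2$ (diameter $1$ would require no antipodal pairs, which is false). Two distinct vertices are at distance $1$ when they are not antipodal, and at distance $2$ exactly when they form an antipodal pair: a common neighbour of $\mathbf{x}$ and $\mathbf{x}+\mathbf{1}$ is any third vertex, which exists since $2^n\ge 4$.

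Next we describe a large enough group of automorphisms. Any permutation $\sigma$ of $\Field_2^n$ that commutes with the involution $\tau:\mathbf{x}\mapsto \mathbf{x}+\mathbf{1}$ maps antipodal pairs to antipodal pairs. It therefore preserves non-adjacency among distinct vertices, so it is an automorphism. These permutations form the centralizer of $\tau$, the hyperoctahedral group $\Field_2\wr S_{2^{n-1}}$, which acts on the $2^{n-1}$ matching edges. Concretely, a permutation of the antipodal pairs, combined with an independent choice of orientation within each pair, gives an automorphism.

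Finally we verify transitivity at each distance. The case of distance $0$ is vertex-transitivity, which is already known. For distance $2$, take ordered pairs $(\mathbf{v}_1,\mathbf{v}_1+\mathbf{1})$ and $(\mathbf{u}_1,\mathbf{u}_1+\mathbf{1})$. Choose a permutation of the antipodal classes sending $\{\mathbf{v}_1,\mathbf{v}_1+\mathbf{1}\}$ to $\{\mathbf{u}_1,\mathbf{u}_1+\mathbf{1}\}$, with orientation chosen so that $\mathbf{v}_1\mapsto\mathbf{u}_1$; the map $\mathbf{v}_1+\mathbf{1}\mapsto\mathbf{u}_1+\mathbf{1}$ then follows from commuting with $\tau$. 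For distance $1$, the vertices $\mathbf{v}_1,\mathbf{v}_2$ lie in two distinct antipodal classes, and likewise $\mathbf{u}_1,\mathbf{u}_2$. Choose a class permutation sending the first class to the first and the second to the second, with orientations fixing $\mathbf{v}_i\mapsto\mathbf{u}_i$, and arbitrary on the other classes.

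The only step requiring care is the degenerate case $n=2$ if it is permitted. There $\Gilbert{2,2,2}$ is the $4$-cycle $C_4$, which is still a cocktail-party graph, so the argument goes through unchanged. No real obstacle is expected; the key point is recognizing the graph as $K_{2^n}$ minus a perfect matching.
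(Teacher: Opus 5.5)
Your proof is correct and follows essentially the same idea as the paper's: the only non-neighbour of $\mathbf{x}$ is $\mathbf{x}+\mathbf{1}$, so any permutation respecting this antipodal pairing is an automorphism. The paper uses vertex-transitivity to fix $\mathbf{0}$ and then exhibits one such permutation, swapping $\mathbf{u}\leftrightarrow\mathbf{v}$ and $\overline{\mathbf{u}}\leftrightarrow\overline{\mathbf{v}}$; this is a single element of the centralizer of $\tau$, which you use in full to handle ordered pairs directly.
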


\begin{proof}
Every Gilbert graph is vertex-transitive since it is a Cayley graph.
Consequently, it is also distance-transitive if and only if for every two vertices $\mathbf{u},\mathbf{v} \in \Field_q^n \setminus \{\mathbf{0}\}$, such that $\DG(\mathbf{0},\mathbf{u}) = \DG(\mathbf{0},\mathbf{v})$, there exists a graph automorphism that maps $\{\mathbf{0},\mathbf{u}\}$ to $\{\mathbf{0},\mathbf{v}\}$.
Let $\Gr{G} = \Gilbert{2,n,n}$. For every vertex $\mathbf{v}\in \Field_2^n$, the only vertex that is not adjacent to $\mathbf{v}$ is its complement $\overline{\mathbf{v}} \triangleq \mathbf{v} \oplus \mathbf{1}_n$, where $\oplus$ is the componentwise modulo-2 sum.
Let $\mathbf{u}$ and $\mathbf{v}$ be two vertices such that $\DG(\mathbf{0},\mathbf{v}) = \DG(\mathbf{0},\mathbf{u})$. Consider the mapping
$\varphi \colon \Field_q^n \to \Field_q^n$ given by
\begin{align}
& \varphi(\mathbf{x}) =
\begin{cases}
\mathbf{u} & \text{if } \mathbf{x} = \mathbf{v}, \\
\mathbf{\overline{u}} & \text{if } \mathbf{x} = \overline{\mathbf{v}}, \\
\mathbf{v} & \text{if } \mathbf{x} = \mathbf{u}, \\
\overline{\mathbf{v}} & \text{if } \mathbf{x} = \overline{\mathbf{u}}, \\
\mathbf{x} & \text{otherwise}.
\end{cases}
\end{align}
If $\DG(\mathbf{0},\mathbf{v}) = \DG(\mathbf{0},\mathbf{u}) = 2$, then $\mathbf{v} = \mathbf{u} = \mathbf{1}_n$ and $\varphi$ is
the identity mapping. Otherwise, we have $\DG(\mathbf{0},\mathbf{v}) = \DG(\mathbf{0},\mathbf{u}) = 1$.
In the latter case, $\varphi$ permutes between $\mathbf{v}$ and $\mathbf{u}$, and also between their complements, while keeping
all other vertices unchanged. Hence, $\varphi$ is a graph automorphism that maps $\{\mathbf{0},\mathbf{v}\}$ to $\{\mathbf{0},\mathbf{u}\}$.
\end{proof}

The next lemma proves that every Gilbert graph not listed in Theorem~\ref{thm: main result} is not edge-transitive. The proof identifies
an invariant of edges in edge-transitive graphs and shows that this invariant takes different values for two edges of the graph.

\begin{lemma}
\label{lem: gilbert only if direction}
Let $q,n,d \in \naturals$ be such that $(q,n,d)$ does not satisfy any of the conditions in Item~3 of Theorem~\ref{thm: main result}.
Then, the Gilbert graph $\Gilbert{q,n,d}$ is not edge-transitive.
\end{lemma}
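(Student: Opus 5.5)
The plan is to use the number of triangles through an edge as the invariant. If $\Gr{G}$ is edge-transitive, then every edge $\{\mathbf{u},\mathbf{v}\}$ has the same number $\lambda(\mathbf{u},\mathbf{v})$ of common neighbours of its endpoints. Since $\Gilbert{q,n,d}$ is a Cayley graph on $\Field_q^n$, every edge can be translated to the form $\{\mathbf{0},\mathbf{x}\}$ with $1\le \wt{x}\le d-1$. Set $r=d-1$. The number of common neighbours of $\mathbf{0}$ and $\mathbf{x}$ is
\[
\lambda(\mathbf{x}) = \card{B(\mathbf{0},r)\cap B(\mathbf{x},r)} - 2,
\]
where $B(\cdot,r)$ is the Hamming ball of radius $r$. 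This quantity depends only on $w=\wt{x}$, so we write $\lambda_w$. The excluded parameters are $d\ge 3$ together with either $q\ge 3$, or $q=2$ and $4\le d\le n-1$. In all of these, $r\ge 2$, so edges with $w=1$ and $w=2$ both exist. It therefore suffices to show $\lambda_1\neq\lambda_2$ under these hypotheses.

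To compare them, take $\mathbf{x}_1=\mathbf{e}_1$ and $\mathbf{x}_2=\mathbf{e}_1+\mathbf{e}_2$. Count vectors $\mathbf{y}\in B(\mathbf{0},r)$ by conditioning on $(y_1,y_2)\in\Field_q^2$ and on the weight $s$ of $(y_3,\dots,y_n)$; there are $\binom{n-2}{s}(q-1)^s$ tails of weight $s$. For each of the $q^2$ choices of $(y_1,y_2)$, write down $\wt{y}$, $\Dh(\mathbf{y},\mathbf{x}_1)$ and $\Dh(\mathbf{y},\mathbf{x}_2)$ as $s$ plus a constant in $\{0,1,2\}$. Then $\lambda_1-\lambda_2$ becomes a signed sum over the classes ($y_1\in\{0,1,\text{other}\}$, $y_2\in\{0,1,\text{other}\}$). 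Most terms cancel. What remains involves only the boundary values $s=r-1$ and $s=r-2$, with coefficients that are polynomials in $q$. I expect it to be a combination like
\[
c_1(q)\binom{n-2}{r-1}(q-1)^{r-1} - c_2(q)\binom{n-2}{r-2}(q-1)^{r-2},
\]
possibly with a single sign throughout.

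The main obstacle is this bookkeeping, in particular checking that the surviving combination is nonzero exactly in the excluded range. For $q\ge 3$, the classes with $y_i\notin\{0,1\}$ contribute extra terms that should make the difference strictly positive. I expect $\lambda_1>\lambda_2$, since nearer centres share more of the ball. For $q=2$ the count simplifies. Then $d=3$ ($r=2$) and $d=n$ ($r=n-1$, where $\binom{n-2}{r-1}=\binom{n-2}{n-2}$ and the boundary terms balance) should give equality, which is consistent with Theorem~\ref{thm: main result}. For $4\le d\le n-1$, both binomials $\binom{n-2}{r-1}$ and $\binom{n-2}{r-2}$ are positive and the difference should reduce to something like $\binom{n-2}{r-1}-\binom{n-2}{r-3}$ or a similar expression. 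I would verify that this is nonzero using $2\le r-1\le n-2$.

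If the signs turn out awkward, the fallback is to compare $w=1$ with $w=r$ instead. Alternatively, one can use the standard fact that ball intersection sizes decrease with centre distance. Either way, $\lambda$ takes two different values on edges, so the graph is not edge-transitive.
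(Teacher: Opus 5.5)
Your invariant is the paper's in disguise. The paper uses $F(\mathbf{v})=|\Neighbors(\mathbf{v})\setminus\Neighbors(\mathbf{0})|$, which in a regular graph equals $\Delta-\lambda(\mathbf{0},\mathbf{v})$ up to an additive constant; your symmetric version even avoids the paper's detour through an automorphism fixing $\mathbf{0}$. The paper also compares weights $1$ and $2$.

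The gap is the binary range $q=2$, $4\le d\le n-1$. There your reduction ``it suffices to show $\lambda_1\neq\lambda_2$'' is false. Carrying out your own bookkeeping over the classes $\{0,1,\text{other}\}$ gives exactly
\[
\lambda_1-\lambda_2=(q-2)\,(q-1)^{d-2}\binom{n-2}{d-2},
\]
so for $q=2$, weight-$1$ and weight-$2$ edges lie on the same number of triangles for every $d$. Concretely, for $q=2$ both $|B(\mathbf{0},r)\cap B(\mathbf{e}_1,r)|$ and $|B(\mathbf{0},r)\cap B(\mathbf{e}_1+\mathbf{e}_2,r)|$ equal $2\sum_{j=0}^{r-1}\binom{n-2}{j}+2\sum_{j=0}^{r-2}\binom{n-2}{j}$, by Pascal's rule. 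The nonzero expression you anticipate (something like $\binom{n-2}{r-1}-\binom{n-2}{r-3}$) never appears, and no care with signs will produce one.

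Your fallback does not close this as written. In $\Field_2^n$, ball intersections are only weakly decreasing in the centre distance: they agree for distances $2k-1$ and $2k$. So a strict ``decrease'' cannot be quoted, and the comparison of $w=1$ with $w=r$ is never actually computed.

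The missing step is a comparison with a weight-$3$ edge, which is what the paper does. Since $d\ge 4$, the vector $\mathbf{u}_3=(1,1,1,0,\ldots,0)$ is a neighbour of $\mathbf{0}$. A direct count gives $F(\mathbf{u}_3)-F(\mathbf{u}_1)=2\binom{n-3}{d-2}$, i.e.\ $\lambda_1-\lambda_3=2\binom{n-3}{d-2}$. This is nonzero exactly when $d\le n-1$, which is also where the case $d=n$ drops out.

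For $q\ge 3$ your plan works, and your predicted sign $\lambda_1>\lambda_2$ is correct. Your explicit class ``other'' is in fact more careful than the paper's count of $F(\mathbf{u}_2)$. The paper asserts that the relevant weight-$d$ vertices must begin with $(1,1)$, and so omits those beginning with $(1,a)$ or $(a,1)$ with $a\notin\{0,1\}$. For $q=n=d=3$, for instance, the true value is $F(\mathbf{u}_2)=6$, whereas the paper's formula gives $2$. With these vertices restored, $F(\mathbf{u}_2)-F(\mathbf{u}_1)$ equals the displayed quantity rather than its negative. It still vanishes only for $q=2$, so the paper's conclusion in that case is unaffected.
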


\begin{proof}
Let $\Gr{G} = \Gilbert{q,n,d}$ be a Gilbert graph such that $(q,n,d)$ does not satisfy any of the conditions in Item~3 of Theorem~\ref{thm: main result},
and suppose to the contrary that $\Gr{G}$ is edge-transitive. Define $F \colon \Neighbors(\mathbf{0}) \to \naturals$ to be the number
of neighbors of a vertex $\mathbf{v} \in \Neighbors(\mathbf{0})$ that are not neighbors of $\mathbf{0} \in \Field_q^n$, i.e.,
\begin{align}
F(\mathbf{v}) = |\Neighbors(\mathbf{v}) \setminus \Neighbors(\mathbf{0})|.
\end{align}
Under the assumption that $\Gr{G}$ is edge-transitive, for every pair of vertices $\mathbf{u}, \mathbf{v} \in \Neighbors(\mathbf{0})$, there
exists an automorphism $\varphi$ of $\Gr{G}$ such that $\varphi(\{\mathbf{0},\mathbf{u}\}) = \{\mathbf{0},\mathbf{v}\}$.
Hence either $\varphi(\mathbf{0}) = \mathbf{0}$ and $\varphi(\mathbf{u}) = \mathbf{v}$,
or $\varphi(\mathbf{0}) = \mathbf{v}$ and $\varphi(\mathbf{u}) = \mathbf{0}$.
In the latter case, since $\Gr{G}$ is a Cayley graph with a symmetric connection set,
both the translation $\tau_{-\mathbf{v}} \colon \mathbf{x} \mapsto \mathbf{x}-\mathbf{v}$
and the negation map $\iota \colon \mathbf{x} \mapsto -\mathbf{x}$ are automorphisms.
Therefore, the composition
\[
\psi := \iota \circ \tau_{-\mathbf{v}} \circ \varphi
\]
is an automorphism of $\Gr{G}$ satisfying $\psi(\mathbf{0}) = \mathbf{0}$ and
$\psi(\mathbf{u}) = \mathbf{v}$. Consequently, for every pair
$\mathbf{u}, \mathbf{v} \in \Neighbors(\mathbf{0})$, there exists an automorphism $\psi$
fixing $\mathbf{0}$ and mapping $\mathbf{u}$ to $\mathbf{v}$.
Graph automorphisms preserve adjacency, and hence
\begin{align*}
F(\mathbf{u}) &= |\Neighbors(\mathbf{u}) \setminus \Neighbors(\mathbf{0})| \\
&= |\Neighbors(\psi(\mathbf{u})) \setminus \Neighbors(\psi(\mathbf{0}))| \\
&= |\Neighbors(\mathbf{v}) \setminus \Neighbors(\mathbf{0})| \\
&= F(\mathbf{v}).
\end{align*}
Hence the value $F(\mathbf{v})$ is identical for all vertices $\mathbf{v} \in \Neighbors(\mathbf{0})$.
In particular, consider the two vertices
\[
\mathbf{u}_1 = (1,0,0,\ldots,0), \qquad \mathbf{u}_2 = (1,1,0,\ldots,0).
\]
Since the third condition in Theorem~\ref{thm: main result} is not satisfied, it follows that $d \ge 3$.
Hence, $\mathbf{u}_1, \mathbf{u}_2 \in \Neighbors(\mathbf{0})$, and therefore $F(\mathbf{u}_1) = F(\mathbf{u}_2)$.

We use combinatorial arguments to derive closed-form expressions for $F(\mathbf{u}_1)$ and $F(\mathbf{u}_2)$. We adopt
the convention for binomial coefficients that $\binom{n}{k} = 0$ if $k > n$ or $k < 0$.

The neighbors of $\mathbf{u}_1$ that are not neighbors of $\mathbf{0}$ are precisely the vertices in $\mathbb{F}_q^n$
whose first coordinate equals $1$ and whose Hamming weight is $d$. Hence, the number of such vertices is
\begin{align}
\label{eq: F(u1)}
F(\mathbf{u}_1) = \binom{n-1}{d-1} \, (q-1)^{d-1}.
\end{align}
The set of neighbors of $\mathbf{u}_2$ that are not neighbors of $\mathbf{0}$ is the disjoint union of the following sets:
\begin{itemize}
\item The neighbors of $\mathbf{u}_2$ that are not neighbors of $\mathbf{0}$ and have Hamming weight $d$. Such vertices cannot begin with the
subsequences $(0,0)$, $(0,1)$, or $(1,0)$, since in each of these cases their Hamming weight would be less than $d$, and hence they would be
adjacent to $\mathbf{0}$. Therefore, they must begin with $(1,1)$. The number of such vertices is $\binom{n-2}{d-2} \, (q-1)^{d-2}$.
\item The neighbors of $\mathbf{u}_2$ that are not neighbors of $\mathbf{0}$ and have Hamming weight $d+1$. By the same reasoning, each of
these vertices starts with $(1,1)$, and their number is $\binom{n-2}{d-1} \, (q-1)^{d-1}$.
\end{itemize}
Overall,
\begin{align}
\label{eq: F(u2)}
F(\mathbf{u}_2) = \binom{n-2}{d-2} \, (q-1)^{d-2} + \binom{n-2}{d-1} \, (q-1)^{d-1}.
\end{align}
Consequently, by \eqref{eq: F(u1)} and \eqref{eq: F(u2)}, the equality $F(\mathbf{u}_2) - F(\mathbf{u}_1) = 0$ yields
\begin{align}
&\binom{n-2}{d-2} \, (q-1)^{d-2} + \binom{n-2}{d-1} \, (q-1)^{d-1} - \binom{n-1}{d-1} \, (q-1)^{d-1} = 0, \\[0.1cm]
\iff \quad & \binom{n-2}{d-2} + \binom{n-2}{d-1} \, (q-1) - \binom{n-1}{d-1} \, (q-1) = 0, \\[0.1cm]
\iff \quad & \frac{1}{n-d} + \frac{q-1}{d-1} - \frac{(n-1)(q-1)}{(n-d)(d-1)} = 0, \\[0.1cm]
\iff \quad & (d-1) + (n-d)(q-1) - (n-1)(q-1) = 0, \\[0.1cm]
\iff \quad & (q-2)(1-d) = 0. \label{eq: 17.02.26}
\end{align}
By \eqref{eq: 17.02.26}, if $q \ne 2$, we obtain a contradiction to the assumption that $\Gr{G}$ is edge-transitive (recall that $d \geq 3$).

Otherwise, if none of the conditions in Item~3 of Theorem~\ref{thm: main result} are satisfied, then $q = 2$ and $4 \leq d \leq n-1$.
In the latter case,
\[
\mathbf{u}_3 \triangleq (1,1,1,0,\ldots,0) \in \Neighbors(\mathbf{0}).
\]
Let $q = 2$ and $4 \leq d \leq n-1$.
We calculate $F(\mathbf{u}_3)$ and compare it to $F(\mathbf{u}_1)$, since, by the above, these two values are identical as
$\mathbf{u}_1, \mathbf{u}_3 \in \Neighbors(\mathbf{0})$.
The set of neighbors of $\mathbf{u}_3$ that are not neighbors of $\mathbf{0}$ is the disjoint union of the following sets:
\begin{itemize}
\item The neighbors of $\mathbf{u}_3$ with Hamming weight $d$. These vertices can differ from $\mathbf{u}_3$ in at most one
of the first three coordinates; hence, the Hamming weight of their first three coordinates must be $2$ or $3$. We consider
these two cases separately.
\begin{enumerate}
\item If this Hamming weight is $2$, then the first three coordinates of these vertices are either $(0,1,1)$, $(1,0,1)$, or
$(1,1,0)$, and in each of these three cases, there are $\binom{n-3}{d-2}$ possibilities for the remaining coordinates.
\item If the first three coordinates are $(1,1,1)$, then there are $\binom{n-3}{d-1}$ possibilities for the remaining coordinates.
\end{enumerate}
Therefore, the total number of neighbors of $\mathbf{u}_3$ with Hamming weight $d$ is
\[
3 \, \binom{n-3}{d-2} + \binom{n-3}{d-3}.
\]
\item The neighbors of $\mathbf{u}_3$ with Hamming weight $d+1$. Such vertices must have their first three coordinates equal to
$(1,1,1)$, and there are $\binom{n-3}{d-2}$ such vertices.
\item The neighbors of $\mathbf{u}_3$ with Hamming weight $d+2$. Again, the first three coordinates must be $(1,1,1)$, and there
are $\binom{n-3}{d-1}$ such vertices.
\end{itemize}
Overall,
\begin{align}  \label{eq2: 17.02.26}
F(\mathbf{u}_3) &= 4 \, \binom{n-3}{d-2} + \binom{n-3}{d-3} + \binom{n-3}{d-1}.
\end{align}
By invoking Pascal's identity
\[
\binom{a}{b} = \binom{a-1}{b-1} + \binom{a-1}{b}, \quad a,b \in \naturals,
\]
three times, equality \eqref{eq2: 17.02.26} can be rewritten as
\begin{align}
F(\mathbf{u}_3) &= 3 \, \binom{n-3}{d-2} + \Biggl[ \binom{n-3}{d-2} + \binom{n-3}{d-3} \Biggr] + \binom{n-3}{d-1} \nonumber \\
&= 3 \, \binom{n-3}{d-2} + \binom{n-2}{d-2} + \binom{n-3}{d-1} \nonumber \\
&= 2 \, \binom{n-3}{d-2} + \Biggl[ \binom{n-3}{d-2} + \binom{n-3}{d-1} \Biggr] + \binom{n-2}{d-2} \nonumber \\
&= 2 \, \binom{n-3}{d-2} + \binom{n-2}{d-1} + \binom{n-2}{d-2} \nonumber \\
\label{eq: F(u3)}
&= 2 \, \binom{n-3}{d-2} + \binom{n-1}{d-1}.
\end{align}
Consequently, by \eqref{eq: F(u1)} (for $q=2$) and \eqref{eq: F(u3)}, the equality $F(\mathbf{u}_3) - F(\mathbf{u}_1) = 0$ yields
\begin{align}
\label{eq1: 09.03.26}
F(\mathbf{u}_3) - F(\mathbf{u}_1) = 2 \, \binom{n-3}{d-2} = 0.
\end{align}
Since $4 \leq d \leq n-1$ in the considered case, equality \eqref{eq1: 09.03.26} holds if and only if $d-2 > n-3$, i.e., $d > n-1$.
This leads to a contradiction. Therefore, in this case as well, we obtain a contradiction to the assumption that $\Gr{G}$ is edge-transitive.
\end{proof}

\section{Association Schemes}
\label{sec: association schemes}

The proof of Theorem~\ref{thm: main result complement}, which appears in the next section (Section~\ref{sec: transitive complement gilbert graphs}),
relies on the theory of association schemes. This section begins by briefly recalling the necessary definitions and properties of association schemes.

\subsection{Association schemes and their matrix representation}
\label{subsection: Association schemes and their matrix representation}

\begin{definition}[association scheme]
\label{def: association scheme}
Let $\mathcal{X}$ be a finite set and let $m \in \naturals$.
An \emph{$(m+1)$-class association scheme} on $\mathcal{X}$ is a partition
$\mathcal{R} = \{R_0, R_1, \ldots, R_m\}$ of $\mathcal{X} \times \mathcal{X}$
(i.e., $\bigcup_{i=0}^{m} R_i = \mathcal{X} \times \mathcal{X}$ and
$R_i \cap R_j = \varnothing$ for all $i \neq j$) that satisfies the following properties:
\begin{itemize}
\item $R_0 = \{(x,x): \, x \in \mathcal{X}\}$ is the identity relation.
\item For every $0 \le i \le m$, the relation $R_i^{\star} \triangleq \{(y,x): \, (x,y) \in R_i\}$ also belongs to $\mathcal{R}$
(i.e., $R_i^{\star} = R_j$ for some $j$).
In the special case where $R_i^{\star} = R_i$ for every $0 \le i \le m$, the association scheme is called \emph{symmetric}.
\item For every $0 \le i,j,k \le m$, there exist integers $p_{ij}^{k}$ such that for every $(x,y) \in R_k$,
\[
\Bigl| \, \bigl\{z \in \mathcal{X}: \, (x,z) \in R_i \text{ and } (z,y) \in R_j\bigr\} \, \Bigr| = p_{ij}^{k},
\]
i.e., the above quantity depends only on $i,j,k$ and not on the particular pair $(x,y) \in R_k$.
The integers $\{p_{ij}^{k}\}$ are called the \emph{intersection numbers} of the association scheme.
\end{itemize}
\end{definition}

For every $i \in \{0, 1, \ldots, m\}$, the \emph{adjacency matrix representation} of the relation $R_i$ is the matrix
$\A_i \in \{0,1\}^{|\mathcal{X}| \times |\mathcal{X}|}$, where
\begin{align}
\A_i(x,y) =
\begin{cases}
1 & \text{if } (x,y) \in R_i, \\
0 & \text{otherwise.}
\end{cases}
\end{align}

One can represent the relations of an association scheme by adjacency matrices. Accordingly, an association scheme
$\mathcal{R} = \{R_0, R_1, \ldots, R_m\}$ can be represented by the set of matrices $\{\A_0, \A_1, \ldots, \A_m\}$. In this
representation, the conditions of Definition~\ref{def: association scheme} are rewritten as follows:
\begin{itemize}
\item $\sum_{i=0}^{m} \A_i = \AllOne$, where $\AllOne$ denotes the all-ones matrix of size $\card{\mathcal{X}} \times \card{\mathcal{X}}$.
\item $\A_0 = \Identity$ is the identity matrix of size $\card{\mathcal{X}} \times \card{\mathcal{X}}$.
\item For every $0 \le i \le m$, the equality $\A_i^T = \A_{j}$ holds for some $0 \le j \le m$.
If the association scheme is symmetric then $\A_i^T = \A_i$ for every $0 \le i \le m$.
\item For every $0 \le i,j \le m$,
\begin{align}
\label{eq: 21.03.26}
\A_i \A_j = \sum_{k=0}^{m} p_{ij}^k \A_k = \A_j \A_i.
\end{align}
\end{itemize}
Since the matrices $\A_0, \A_1, \ldots, \A_m$ commute pairwise, they admit a common basis of eigenvectors and can therefore be simultaneously
diagonalized. The matrices $\A_0, \A_1, \ldots, \A_m$ span a commutative algebra called the \emph{Bose--Mesner algebra}.
\begin{definition}[Bose--Mesner algebra]
Let $\mathcal{R}$ be an association scheme on a finite set $\mathcal{X}$ with adjacency matrices $\A_0,\dots,\A_m$.
The \emph{Bose--Mesner algebra} of $\mathcal{R}$, denoted by $\boseMesner(\mathcal{R})$, is the subalgebra of
$\mathbb{R}^{|\mathcal{X}|\times|\mathcal{X}|}$ consisting of all real linear combinations of these matrices:
\[
\boseMesner(\mathcal{R})=\mathrm{span}_{\mathbb{R}}\{\A_0,\dots,\A_m\}.
\]
Since the products $\A_i\A_j$ are again linear combinations of $\A_0,\dots,\A_m$ (see \eqref{eq: 21.03.26}), this span is closed under matrix multiplication.
\end{definition}

\subsection{Dual basis and eigenmatrices}
\label{subsection: Dual basis and eigenmatrices}
Apart from the basis formed by the adjacency matrices, the Bose--Mesner algebra
has another distinguished basis called the \emph{dual basis}, consisting of
orthogonal projections (the primitive idempotents). The following theorem
states this fact.

\begin{theorem}
\label{thm: dual basis of association scheme}
Let $\mathcal{R}$ be an association scheme on a set $\mathcal{X}$ with
adjacency matrices $\{\A_0,\A_1,\ldots,\A_m\}$. Then there exists a basis of
$\boseMesner(\mathcal{R})$ consisting of orthogonal projections
$\Ei_0,\Ei_1,\ldots,\Ei_m$. In particular, there exist scalars $q_k(i)$ and
$p_i(k)$ for $0\le i,k\le m$ such that
\begin{align}
\Ei_k = \frac{1}{|\mathcal{X}|}\sum_{i=0}^m q_k(i)\A_i,
\qquad
\A_i = \sum_{k=0}^m p_i(k)\Ei_k.
\end{align}
Moreover, for all $0\le i,j\le m$,
\begin{align}
\label{eq: orthogonality of primitive idempotents}
\Ei_i \Ei_j = \delta_{i,j} \Ei_i, \text{ where } \delta_{i,j} \triangleq \begin{cases}
1 & \text{if } i = j, \\
0 & \text{otherwise.}
\end{cases}
\end{align}
\end{theorem}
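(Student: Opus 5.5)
The plan is to use only the structure of $\boseMesner(\mathcal{R})$ as a finite-dimensional commutative matrix algebra that is closed under conjugate transposition, and to obtain the projections $\Ei_k$ from a simultaneous spectral decomposition. The algebra is commutative by \eqref{eq: 21.03.26}. It is closed under transposition because $\A_i^T=\A_j$ for some $j$. It contains $\Identity=\A_0$. Since the $\A_i$ are $0/1$ matrices with disjoint supports, they are linearly independent, so $\dim\boseMesner(\mathcal{R})=m+1$. One can also work over $\mathbb{C}$; in the symmetric case, which is all that is used later for the Hamming scheme, every matrix involved is real symmetric.

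The first step is simultaneous diagonalization. A commuting family of normal matrices is unitarily simultaneously diagonalizable. Each $\A_i$ is normal, since $\A_i\A_i^T=\A_i\A_j=\A_j\A_i=\A_i^T\A_i$. Hence $\mathbb{C}^{\mathcal{X}}$ splits as an orthogonal direct sum of common eigenspaces $V_1,\dots,V_r$. Here $V_k$ is the set of vectors on which each $\A_i$ acts by a fixed scalar $\theta_i(k)$, and distinct $k$ give distinct tuples $(\theta_0(k),\dots,\theta_m(k))$. Let $\Ei_k$ be the orthogonal projection onto $V_k$. Then $\Ei_i\Ei_j=\delta_{i,j}\Ei_i$ and $\sum_k\Ei_k=\Identity$, and $\A_i=\sum_k\theta_i(k)\Ei_k$. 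So one can set $p_i(k):=\theta_i(k)$.

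The second step shows that each $\Ei_k$ lies in $\boseMesner(\mathcal{R})$. For $k\neq \ell$ the eigenvalue tuples differ, so pick $i(k,\ell)$ with $\theta_{i(k,\ell)}(k)\neq\theta_{i(k,\ell)}(\ell)$. Define
\[
\Ei_k=\prod_{\ell\neq k}\frac{\A_{i(k,\ell)}-\theta_{i(k,\ell)}(\ell)\Identity}{\theta_{i(k,\ell)}(k)-\theta_{i(k,\ell)}(\ell)},
\]
and check the identity on each $V_s$: the product acts as $1$ on $V_k$ and as $0$ on $V_\ell$ for every $\ell\neq k$. This is a polynomial in elements of the algebra, and the algebra is closed under products, so $\Ei_k\in\boseMesner(\mathcal{R})$.

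The third step counts dimensions, and I expect it to need the most care. The $\Ei_k$ are nonzero and mutually orthogonal idempotents, so they are linearly independent. Their span contains every $\A_i$, so it is all of $\boseMesner(\mathcal{R})$, which gives $r=m+1$. I would re-index them as $\Ei_0,\dots,\Ei_m$, with $\Ei_0=\frac{1}{|\mathcal{X}|}\AllOne$; this works because $\AllOne=\sum_i\A_i$ lies in the algebra and $\frac{1}{|\mathcal{X}|}\AllOne$ is the projection onto the all-ones eigenspace. Since the $\A_i$ also form a basis, one can write $\Ei_k=\frac{1}{|\mathcal{X}|}\sum_i q_k(i)\A_i$ by defining the coefficients $q_k(i)$ this way; the normalization factor is purely conventional. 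In the symmetric case all eigenvalues are real, so the projections are real matrices and $p_i(k),q_k(i)\in\mathbb{R}$, consistent with $\boseMesner(\mathcal{R})$ being a real span.
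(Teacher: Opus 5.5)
The paper does not prove this theorem; it is quoted as background on association schemes (cf.\ \cite{Delsarte1973,DelsarteLevenshtein1998} and \cite[Chapter~21]{MacWilliamsSloane77}), so there is no in-paper argument to compare yours with. Your argument is the standard elementary proof, and it is correct. Normality and commutativity of the $\A_i$ give an orthogonal decomposition into joint eigenspaces. The Lagrange-type products put each projection $\Ei_k$ inside the algebra. The two inclusions between $\mathrm{span}\{\Ei_k\}$ and $\mathrm{span}\{\A_i\}$, together with the linear independence of both families, force $r=m+1$. The classical treatments usually phrase this more abstractly: a commutative matrix algebra closed under conjugate transposition is semisimple, so it has a unique basis of primitive idempotents. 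Your route has the advantage of exhibiting $p_i(k)$ directly as joint eigenvalues.

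You should state one point more sharply than your remark that one ``can also work over $\mathbb{C}$''. The paper defines $\boseMesner(\mathcal{R})$ as a \emph{real} span, so the statement as written holds precisely for symmetric schemes. Indeed, suppose $\mathrm{span}_{\Reals}\{\A_0,\dots,\A_m\}$ had a basis of orthogonal projections. These would be real symmetric matrices, and every $\A_i$, being a real combination of them, would then be symmetric. So for a non-symmetric scheme your Step~2 necessarily lands in the complexified algebra, because the $\theta_i(k)$ can be non-real (e.g.\ for the cyclic scheme on $\mathbb{Z}_3$). Either restrict to symmetric schemes or replace $\boseMesner(\mathcal{R})$ by its complex span.

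In the symmetric case you do not even need to assume commutativity. Since $\A_i\A_j=\sum_k p_{ij}^k\A_k$ is symmetric, $\A_i\A_j=(\A_i\A_j)^T=\A_j\A_i$.

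Finally, the normalization $\Ei_0=\tfrac{1}{|\mathcal{X}|}\mathbf{J}$ is not part of the statement. If you keep it, add a one-line justification. The matrix $\mathbf{J}=\sum_i\A_i$ acts on the joint eigenspace containing $\mathbf{1}$ as the scalar $|\mathcal{X}|$. The rank-one matrix $\mathbf{J}$ has a one-dimensional eigenspace for that eigenvalue, so this joint eigenspace is exactly $\mathrm{span}\{\mathbf{1}\}$.
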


Define the matrices $\mathbf{P}\triangleq[p_i(k)]_{i,k=0}^m$ and
$\mathbf{Q}\triangleq[q_k(i)]_{k,i=0}^m$.
The matrices $\mathbf{P}$ and $\mathbf{Q}$ are called the
\emph{first eigenmatrix} and the \emph{second eigenmatrix} of the
association scheme, respectively, and they satisfy
\begin{align}
\mathbf{P}\mathbf{Q}=\mathbf{Q}\mathbf{P}=|\mathcal{X}|\,\I.
\end{align}

\medskip
Association schemes may be viewed as a natural generalization of
distance-regular graphs. In particular, every distance-regular graph
naturally gives rise to an association scheme, called its
\emph{distance scheme}. The Hamming scheme arises in this way from
the Hamming graph $\mathrm{H}(n,q) = \Gilbert{q,n,2}$.
Introductions to association schemes can be found, e.g., in
\cite{Delsarte1973,DelsarteLevenshtein1998} and in
\cite[Chapter~21]{MacWilliamsSloane77}.

\subsection{The Hamming scheme and Krawtchouk polynomials}
\label{subsection: The Hamming scheme and Krawtchouk polynomials}

\begin{definition}[The Hamming scheme]
The Hamming association scheme $\mathcal{H}$ is an $(n+1)$-class symmetric
association scheme on the set $\Field_q^n$, with adjacency matrices
$\{\A_0, \A_1, \ldots, \A_n\}$. For $0 \le i \le n$, the matrix $\A_i$
is the \emph{distance-$i$ matrix}, defined by
\[
(\A_i)_{\mathbf{u},\mathbf{v}} =
\begin{cases}
1 & \text{if } \mathbf{u},\mathbf{v} \in \Field_q^n \text{ differ in exactly } i \text{ coordinates},\\
0 & \text{otherwise}.
\end{cases}
\]
\end{definition}

The dual basis of the Hamming scheme is given in terms of the Krawtchouk polynomials.

\begin{definition}[Krawtchouk polynomials]  \cite[Eq.~(16)]{DelsarteLevenshtein1998}
\label{def:krawtchouk_polynomials}
Let $q,n \in \mathbb{N}$, $q \geq 2$, and $0 \leq i,\ell \leq n$.
The \emph{$q$-ary Krawtchouk polynomial} $K_\ell(i;n,q)$ is defined by
\begin{align}
\label{eq: Krawtchouk polynomial}
K_\ell(i;n,q) = \sum_{r=0}^{\ell} \binom{i}{r} \, \binom{n-i}{\ell-r} \, (-1)^r \, (q-1)^{\,\ell-r}.
\end{align}
Note that $K_\ell(i;n,q)$ is a polynomial of degree $\ell$ in $i$, since for every $r \in \{0,1,\ldots,\ell\}$
the product $\binom{i}{r}\binom{n-i}{\ell-r}$ is itself a polynomial of degree $\ell$ in $i$.
\end{definition}

\begin{theorem}
\label{thm: P and Q matrices of Hamming scheme}
\cite[Example 1]{DelsarteLevenshtein1998}.
The entries of the first and second eigenmatrices $\mathbf{P} \triangleq [p_i(k)]_{i,k=0}^{m}$ and
$\mathbf{Q} \triangleq [q_k(i)]_{k,i=0}^{m}$ of the Hamming scheme are, respectively, given by
\begin{align}
p_i(k) = K_i(k;n,q), \quad q_k(i) = K_k(i;n,q).
\end{align}
In particular, we have
\begin{align}
& \A_i = \sum_{k=0}^{n} K_i(k;n,q) \, \Ei_k, \\
& \Ei_k = \frac{1}{q^n} \sum_{i=0}^{n} K_k(i;n,q) \, \A_i.
\end{align}
\end{theorem}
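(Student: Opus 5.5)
We verify that the matrices $\mathbf{E}_k$ defined by the second formula are the primitive idempotents of the Hamming scheme. Then $\A_i = \sum_k K_i(k;n,q)\,\Ei_k$ follows from $\mathbf{P}\mathbf{Q} = q^n\,\I$.

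The approach is Fourier analysis on the group $\Field_q^n$. Fix a nontrivial additive character $\chi$ of $\Field_q$. For each $\mathbf{a}\in\Field_q^n$, put $\chi_{\mathbf{a}}(\mathbf{x}) = \chi(\inner{\mathbf{a}}{\mathbf{x}})$. The vectors $\{\chi_{\mathbf{a}}\}_{\mathbf{a}\in\Field_q^n}$ form an orthogonal basis of $\mathbb{C}^{q^n}$, each of squared norm $q^n$.

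Since $\A_i$ is the adjacency matrix of the Cayley graph $\Cayley{\Field_q^n}{\{\mathbf{x}:\wt{x}=i\}}$, each $\chi_{\mathbf{a}}$ is an eigenvector of $\A_i$. Its eigenvalue is the character sum
\[
\lambda_i(\mathbf{a}) = \sum_{\wt{x}=i}\chi(\inner{\mathbf{a}}{\mathbf{x}}).
\]
The key computation is $\lambda_i(\mathbf{a}) = K_i(k;n,q)$ when $\wt{a}=k$. To see this, write $\mathbf{x}$ as a choice of support together with nonzero entries. Suppose $r$ support positions lie in $\mathrm{supp}(\mathbf{a})$ and $i-r$ lie outside it.
\begin{itemize}
\item Each position outside $\mathrm{supp}(\mathbf{a})$ contributes $\sum_{c\neq 0} 1 = q-1$.
\item Each position inside $\mathrm{supp}(\mathbf{a})$ contributes $\sum_{c\ne0}\chi(a_jc) = -1$, by orthogonality of characters.
\end{itemize}
Summing over $r$ gives exactly $\sum_r\binom{k}{r}\binom{n-k}{i-r}(-1)^r(q-1)^{i-r}$, which is \eqref{eq: Krawtchouk polynomial}.

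Now define $V_k = \mathrm{span}\{\chi_{\mathbf{a}}:\wt{a}=k\}$ and let $\Ei_k$ be the orthogonal projection onto $V_k$. These are pairwise orthogonal and satisfy $\sum_k \Ei_k = \I$. Moreover $\A_i = \sum_k K_i(k;n,q)\Ei_k$, because $\A_i$ acts by the scalar $K_i(k;n,q)$ on $V_k$.

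The $\Ei_k$ lie in $\boseMesner(\mathcal{H})$: the entries of the projection are
\[
(\Ei_k)_{\mathbf{u},\mathbf{v}} = q^{-n}\sum_{\wt{a}=k}\chi_{\mathbf{a}}(\mathbf{u})\overline{\chi_{\mathbf{a}}(\mathbf{v})} = q^{-n}\lambda_k(\mathbf{u}-\mathbf{v}),
\]
where $\lambda_k(\mathbf{u}-\mathbf{v})$ denotes the same character sum now taken over $\mathbf{a}$ of weight $k$, evaluated at $\mathbf{u}-\mathbf{v}$. By the computation above with the roles of the variables swapped, this depends only on $\wt{\mathbf{u}-\mathbf{v}} = i$ and equals $K_k(i;n,q)$. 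Hence
\[
\Ei_k = q^{-n}\sum_i K_k(i;n,q)\A_i .
\]

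The $n+1$ projections $\Ei_0,\ldots,\Ei_n$ are nonzero and orthogonal, so they are linearly independent. They therefore form a basis of the $(n+1)$-dimensional algebra $\boseMesner(\mathcal{H})$. By the uniqueness of the primitive idempotents in Theorem~\ref{thm: dual basis of association scheme}, they are those idempotents, and the formulas for $p_i(k)$ and $q_k(i)$ follow.

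The main obstacle is the character-sum evaluation: one must handle the support split cleanly and use $\sum_{c\in\Field_q^\ast}\chi(ac) = -1$ for $a\neq0$. This holds because $\chi(a\,\cdot)$ is a nontrivial character of the additive group of $\Field_q$. Everything else is routine linear algebra.
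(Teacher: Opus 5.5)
Your proof is correct. The paper does not prove this statement itself; it only cites \cite[Example~1]{DelsarteLevenshtein1998}. The derivation behind that citation is essentially the one you give. The Hamming scheme is a translation scheme on $\Field_q^n$, so the characters $\chi_{\mathbf{a}}$ simultaneously diagonalize every $\A_i$. Grouping the characters by $w_H(\mathbf{a})$ yields the primitive idempotents, and the factorized sum $\prod_j \sum_{c\neq 0}\chi(a_j c)$ produces $K_i(k;n,q)$.

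What your write-up adds over the citation is self-containedness. Your key computation is exactly the character-theoretic representation in Theorem~\ref{thm: character theory definition}, which the paper quotes separately from \cite{Polyanskiy2019}, so your argument establishes that result as well. Your labeling of the projections by weight also makes explicit the ordering of the $\Ei_k$ on which the eigenmatrices depend.

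One step goes slightly beyond what Theorem~\ref{thm: dual basis of association scheme} actually asserts: the uniqueness, up to ordering, of the primitive idempotents. That theorem only states existence. The uniqueness is immediate, though. Any idempotent $\sum_k c_k\Ei_k$ must have $c_k\in\{0,1\}$. Hence $n+1$ pairwise orthogonal nonzero idempotents in the $(n+1)$-dimensional algebra spanned by your $\Ei_k$ must be the $\Ei_k$ themselves.
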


\begin{theorem}[Properties of Krawtchouk polynomials]
\label{thm: properties of krawtchouk polynomials}
Let $n \in \naturals$, $q \ge 2$, and let $K_l(i;n,q)$ denote the $q$-ary Krawtchouk
polynomials for $0 \le l,i \le n$. Then, the following properties hold:
\begin{enumerate}
\item \label{item: generating function} \cite[Proposition 1.2]{Coleman2011}:
The Krawtchouk polynomials satisfy the generating function identity
\begin{align}
\sum_{k=0}^{n} K_k(i;n,q) \, z^k = \bigl(1+(q-1)z \bigr)^{n-i} (1-z)^i.
\end{align}
\item \label{item: krawtchouk sum} \cite[Theorem 2.1]{Coleman2011}:
For every $i, d \in [n]$, the following summation formula holds
\begin{align}
\sum_{k=0}^{d} K_k(i;n,q) = K_d(i-1;n-1,q).
\end{align}
\end{enumerate}
\end{theorem}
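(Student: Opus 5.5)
The two identities in Theorem~\ref{thm: properties of krawtchouk polynomials} can both be obtained from the explicit formula \eqref{eq: Krawtchouk polynomial} by a generating-function computation. Item~\ref{item: krawtchouk sum} will then follow from Item~\ref{item: generating function} by multiplying with a geometric series.

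For Item~\ref{item: generating function}, we expand the right-hand side as a product of two binomial series:
\begin{align*}
\bigl(1+(q-1)z\bigr)^{n-i}(1-z)^i
= \Bigl(\sum_{s=0}^{n-i}\binom{n-i}{s}(q-1)^s z^s\Bigr)\Bigl(\sum_{r=0}^{i}\binom{i}{r}(-1)^r z^r\Bigr).
\end{align*}
Next we collect the coefficient of $z^k$ by setting $s=k-r$. This coefficient is $\sum_{r}\binom{i}{r}\binom{n-i}{k-r}(-1)^r(q-1)^{k-r}$, which is exactly $K_k(i;n,q)$ by \eqref{eq: Krawtchouk polynomial}. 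Here we use the convention that binomial coefficients vanish outside their natural range, so the summation limits agree. The right-hand side has degree at most $n$, so summing over $0\le k\le n$ gives the identity.

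For Item~\ref{item: krawtchouk sum}, we use the standard fact that the partial sums $\sum_{k=0}^{d}a_k$ are the coefficients of $\frac{1}{1-z}\sum_k a_k z^k$. Applying this to Item~\ref{item: generating function} and using $i\ge 1$ gives
\begin{align*}
\frac{1}{1-z}\bigl(1+(q-1)z\bigr)^{n-i}(1-z)^i = \bigl(1+(q-1)z\bigr)^{(n-1)-(i-1)}(1-z)^{i-1}.
\end{align*}
By Item~\ref{item: generating function} with parameters $(i-1;n-1,q)$, this is the generating function of $K_d(i-1;n-1,q)$ for $0\le i-1\le n-1$. Comparing coefficients of $z^d$ then yields $\sum_{k=0}^{d}K_k(i;n,q)=K_d(i-1;n-1,q)$ for $d\le n$. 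When $d=n$ the right-hand side equals $K_n(i-1;n-1,q)=0$, since its degree exceeds $n-1$. This is consistent with the left-hand side, which is the value at $z=1$ of $(1+(q-1)z)^{n-i}(1-z)^i$, and that value is $0$ for $i\ge1$.

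The only delicate point is bookkeeping: matching the summation ranges in \eqref{eq: Krawtchouk polynomial} when $k$ exceeds $i$ or $n-i$, and the requirement $i\ge1$, which ensures the factor $(1-z)$ actually cancels. Once the binomial convention is fixed, neither step presents a real obstacle.
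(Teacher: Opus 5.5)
Your proof is correct. The paper does not prove this theorem itself: both items are quoted from \cite[Proposition~1.2, Theorem~2.1]{Coleman2011}, so your write-up replaces a citation with a self-contained derivation.

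Your Cauchy-product computation identifies the explicit sum \eqref{eq: Krawtchouk polynomial} with the $z^k$-coefficient of $(1+(q-1)z)^{n-i}(1-z)^i$ for \emph{every} $k\ge 0$, not only $k\le n$. Item~2 then follows formally from Item~1. Dividing by $1-z$ in $\mathbb{R}[[z]]$ is the same as telescoping the recurrence $K_k(i;n,q)=K_k(i-1;n-1,q)-K_{k-1}(i-1;n-1,q)$, and this is exactly where $i\ge 1$ is needed. Your route gives transparency and independence from an unrefereed preprint; the citation gives brevity.

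One wording point concerns the edge case $d=n$. ``Since its degree exceeds $n-1$'' is ambiguous. As a polynomial in its argument, $K_n(x;n-1,q)$ has degree $n$ and is not identically zero; it merely vanishes at $x=0,\dots,n-1$. The correct justification is one your Item~1 computation already supplies: $K_n(i-1;n-1,q)$ is the $z^n$-coefficient of a polynomial of degree $n-1$. Equivalently, each term of the explicit sum would need $r\le i-1$ and $n-r\le n-i$ at the same time, which is impossible. In any case, the paper only uses Item~1 at $z=1$ and Item~2 with upper index $d-1\le n-1$, so this edge case is never needed.
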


\begin{theorem}[Character-theoretic representation of Krawtchouk polynomials]
\label{thm: character theory definition}
Let $p$ be a prime, let $q = p^m$, and let $0 \le i,k \le n$.
For $\mathbf{x} \in \Field_q^n$ with $\wt{\mathbf{x}} = i$, the following holds:
\begin{enumerate}
\item \cite[Eq.~(29)]{Polyanskiy2019}:
The Krawtchouk polynomial admits the representation
\begin{align}
K_k(i;n,q) =
\sum_{\substack{\mathbf{y} \in \Field_q^n \\ \wt{\mathbf{y}} = k}}
\chi_{\mathbf{y}}(\mathbf{x}),
\end{align}
where $\chi_{\mathbf{y}}$ is the additive character of $\Field_q^n$ indexed by $\mathbf{y}$
and defined by (see \cite[Theorem~5.7]{LidlNiederreiter96})
\[
\chi_{\mathbf{y}}(\mathbf{x}) =
\exp\left(\frac{2\pi i}{p} \cdot \Tr\,\big(\langle \mathbf{x}, \mathbf{y} \rangle \big) \right),
\]
$\langle \mathbf{x}, \mathbf{y} \rangle := \sum_{j=1}^n x_j y_j$ denotes the standard inner product
over $\Field_q$, and $\Tr : \Field_q \to \Field_p$ is the trace function given by
\[
\Tr(a) = \sum_{j=0}^{m-1} a^{p^j},  \qquad a \in \Field_q.
\]
\item The trace map $\Tr : \Field_q \to \Field_p$ is $\Field_p$-linear and surjective
\cite[Theorem~2.23]{LidlNiederreiter96}.
\end{enumerate}
\end{theorem}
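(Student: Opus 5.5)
The plan is to prove Item~2 first, since the character-sum identity in Item~1 depends on the additive character $\psi(b) \triangleq \exp\bigl(\tfrac{2\pi i}{p}\Tr(b)\bigr)$ of $\Field_q$ being nontrivial, and that is exactly surjectivity of the trace.

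\emph{Item~2.} I will use three facts about the Frobenius map $a\mapsto a^p$: it is additive in characteristic $p$, it fixes $\Field_p$ pointwise, and $a^{p^m}=a$ for all $a\in\Field_q$.
\begin{itemize}
\item \emph{Values in $\Field_p$.} We have $\Tr(a)^p=\sum_{j=0}^{m-1}a^{p^{j+1}}=\Tr(a)$, because $a^{p^m}=a$. Hence $\Tr(a)$ is a root of $X^p-X$, so $\Tr(a)\in\Field_p$.
\item \emph{$\Field_p$-linearity.} Additivity of Frobenius gives $\Tr(a+b)=\Tr(a)+\Tr(b)$. For $c\in\Field_p$ we have $c^{p^j}=c$, so $\Tr(ca)=c\,\Tr(a)$.
\item \emph{Surjectivity.} $\Tr$ is given by a nonzero polynomial of degree $p^{m-1}<q$. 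Such a polynomial has fewer than $q$ roots, so it cannot vanish on all of $\Field_q$. Its image is therefore a nonzero $\Field_p$-subspace of $\Field_p$, i.e., all of $\Field_p$.
\end{itemize}

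\emph{Item~1.} Fix $\mathbf{x}$ with $\wt{\mathbf{x}}=i$. Since $\Tr$ is additive, $\chi_{\mathbf{y}}(\mathbf{x})=\prod_{j=1}^n\psi(x_jy_j)$. Rather than treating each $k$ separately, I will compare generating functions in a formal variable $z$:
\begin{align*}
\sum_{\mathbf{y}\in\Field_q^n} z^{\wt{\mathbf{y}}}\chi_{\mathbf{y}}(\mathbf{x})
=\prod_{j=1}^n\Bigl(1+z\sum_{a\in\Field_q\setminus\{0\}}\psi(x_ja)\Bigr).
\end{align*}
This factorization holds because both the weight and the character split over coordinates: the choice $y_j=0$ contributes the term $1$, and each choice $y_j=a\neq0$ contributes $z\,\psi(x_ja)$. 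I then evaluate each inner sum.
\begin{itemize}
\item If $x_j=0$, every term equals $1$, so the inner sum is $q-1$.
\item If $x_j\neq0$, the map $a\mapsto x_ja$ permutes $\Field_q$. By Item~2, $\psi$ is a nontrivial homomorphism from $(\Field_q,+)$ to $\mathbb{C}^*$, so $\sum_{b\in\Field_q}\psi(b)=0$. Removing the term $b=0$ gives inner sum $-1$.
\end{itemize}
The product therefore equals $(1+(q-1)z)^{n-i}(1-z)^i$. By Item~\ref{item: generating function} of Theorem~\ref{thm: properties of krawtchouk polynomials}, this is $\sum_k K_k(i;n,q)z^k$. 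Comparing coefficients of $z^k$ gives the claimed identity.

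The only step I expect to need care is the vanishing of the full character sum $\sum_{b}\psi(b)$. It follows from the standard argument: choose $b_0$ with $\psi(b_0)\neq1$, which exists by surjectivity of the trace, and use the shift invariance $\sum_b\psi(b+b_0)=\sum_b\psi(b)$. The remaining steps are direct expansions.
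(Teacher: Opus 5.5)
The paper gives no proof of this statement. It quotes Item~1 from \cite[Eq.~(29)]{Polyanskiy2019} and Item~2 from \cite[Theorem~2.23]{LidlNiederreiter96}. Your proposal is correct and replaces both citations with a self-contained version of the standard textbook argument:
\begin{itemize}
\item Frobenius gives $\Tr(a)^p=\Tr(a)$ and $\Field_p$-linearity.
\item The root count $p^{m-1}<q$ gives surjectivity.
\item The coordinatewise factorization of $\sum_{\mathbf{y}} z^{\wt{y}}\chi_{\mathbf{y}}(\mathbf{x})$, together with $\sum_{b\in\Field_q}\psi(b)=0$, yields $(1+(q-1)z)^{n-i}(1-z)^i$.
\end{itemize}

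The citation route is shorter. Yours makes visible that Item~1 depends on Item~2: surjectivity of $\Tr$ is exactly what makes $\psi$ nontrivial. That same nontriviality is what the paper later invokes in Part~2 of Theorem~\ref{thm: spectrum of complement of Gilbert graph}.

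Two minor points.
\begin{itemize}
\item You do not need the generating-function identity of Theorem~\ref{thm: properties of krawtchouk polynomials}. Expanding $(1+(q-1)z)^{n-i}$ and $(1-z)^i$ by the binomial theorem gives the defining sum \eqref{eq: Krawtchouk polynomial} directly as the coefficient of $z^k$, so the argument stays fully self-contained.
\item State explicitly that $\Tr(\cdot)\in\Field_p$ enters the exponent through an integer representative modulo $p$. This is what makes $\psi$ well defined and a homomorphism $(\Field_q,+)\to\mathbb{C}^{*}$, which your factorization $\chi_{\mathbf{y}}(\mathbf{x})=\prod_j\psi(x_jy_j)$ uses.
\end{itemize}
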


\begin{remark}
\label{rem: binary krawtchouk polynomials}
For $q=2$, the Krawtchouk polynomials defined in \eqref{eq: Krawtchouk polynomial}
reduce to the binary Krawtchouk polynomials
\begin{align}
\label{eq: binary}
K_j(i) \triangleq K_j(i;n,2) = \sum_{r=0}^{j} (-1)^r \, \binom{i}{r} \, \binom{n-i}{j-r},
\qquad j \in \{0, 1, \ldots, n\}.
\end{align}
\end{remark}

\subsection{Spectral properties of the complement of the Gilbert graph}
\label{subsection: Spectrum of the complement of the Gilbert graph}

The theory of association schemes enables a characterization of the spectral properties of the complement of the Gilbert graph.
The next result, stated in Theorem~\ref{thm: spectrum of complement of Gilbert graph}, provides such a characterization.
Since this result appears to be new, we include a proof for completeness.

\begin{theorem}
\label{thm: spectrum of complement of Gilbert graph}
Let $\Gr{G} = \overline{\Gilbert{q,n,d}}$ be the complement of the Gilbert graph.
\begin{enumerate}
\item The adjacency matrix of $\Gr{G}$ admits the decomposition
\begin{align}
\label{eq: decomposition}
\Adjacency(\Gr{G}) = \sum_{j=0}^{n} \gamma_j \Ei_j,
\end{align}
where the scalars $\gamma_j$ are the eigenvalues of $\Adjacency(\Gr{G})$, given by
\begin{align}
\label{eq: 20.03.26}
\gamma_0 = \Delta(\Gr{G}), \quad \gamma_j = - K_{d-1}(j-1;n-1,q), \, \forall 1 \le j \le n,
\end{align}
and $\Delta(\Gr{G}) = \sum_{i=d}^{n} \binom{n}{i} (q-1)^i$ denotes the degree of the regular graph $\Gr{G}$.
\item \label{item: distinct eigenvalues} $\gamma_1 < \gamma_m$ for every $m$ with $2 \leq m \leq n-1$.
\item \label{item: equal eigenvalues} $\gamma_1 = \gamma_n$ if and only if $q=2$ and $d$ is odd.
\item \label{item: polynomial expression} For every polynomial $p$,
\begin{align}
\label{eq:12.02.26}
p\bigl(\Adjacency(\Gr{G})\bigr) = \sum_{j=0}^{n} p(\gamma_j) \, \Ei_j.
\end{align}
\end{enumerate}
\end{theorem}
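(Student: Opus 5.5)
The plan is to read everything off the Hamming scheme. Since two vertices of $\Gr{G}=\CGr{\Gilbert{q,n,d}}$ are adjacent exactly when their Hamming distance is at least $d$, we have $\Adjacency(\Gr{G})=\sum_{i=d}^{n}\A_i$. Substituting $\A_i=\sum_{k}K_i(k;n,q)\,\Ei_k$ from Theorem~\ref{thm: P and Q matrices of Hamming scheme} gives \eqref{eq: decomposition} with
\[
\gamma_k=\sum_{i=d}^{n}K_i(k;n,q).
\]
For $k=0$, $K_i(0;n,q)=\binom{n}{i}(q-1)^i$, so $\gamma_0$ is the degree. For $k\ge 1$, I set $z=1$ in the generating function (Item~\ref{item: generating function} of Theorem~\ref{thm: properties of krawtchouk polynomials}); this gives $\sum_{i=0}^{n}K_i(k;n,q)=q^{n-k}\cdot 0=0$. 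Hence $\gamma_k=-\sum_{i=0}^{d-1}K_i(k;n,q)$, which equals $-K_{d-1}(k-1;n-1,q)$ by Item~\ref{item: krawtchouk sum}. This proves Item~1.

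Item~\ref{item: polynomial expression} follows from \eqref{eq: orthogonality of primitive idempotents} together with $\sum_j\Ei_j=\Identity$, since $\Identity=\A_0$ lies in the algebra and the $\Ei_j$ form a basis of it. Indeed, $\Adjacency(\Gr{G})^r=\sum_j\gamma_j^r\Ei_j$ for every $r\ge 0$, and linearity extends this to any polynomial $p$.

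For Item~\ref{item: equal eigenvalues}, write $N=n-1$ and $l=d-1$, and evaluate \eqref{eq: Krawtchouk polynomial} at the endpoints. At $i=0$ only $r=0$ survives, so $K_l(0;N,q)=\binom{N}{l}(q-1)^l$. At $i=N$ only $r=l$ survives, so $K_l(N;N,q)=(-1)^l\binom{N}{l}$. Since $1\le l\le N$ the binomial is nonzero, so $\gamma_1=\gamma_n$ iff $(q-1)^l=(-1)^l$. This holds iff $q=2$ and $l$ is even, i.e.\ $d$ is odd.

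Item~\ref{item: distinct eigenvalues} is the main obstacle. It amounts to the strict inequality $K_l(i;N,q)<K_l(0;N,q)$ for $1\le i\le N-1$, where $i=m-1$. I would use Theorem~\ref{thm: character theory definition}. Fix $\mathbf{x}\in\Field_q^{N}$ with $\wt{\mathbf{x}}=i$. Then $K_l(i;N,q)$ is a sum of $K_l(0;N,q)$ roots of unity, so equality forces $\chi_{\mathbf{y}}(\mathbf{x})=1$ for every $\mathbf{y}$ of weight $l$. It therefore suffices to exhibit one $\mathbf{y}$ of weight $l$ with $\chi_{\mathbf{y}}(\mathbf{x})\ne 1$.

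\emph{Case $q\ge 3$.} Choose $\mathbf{y}$ of weight $l$ whose support contains a coordinate $j\in\mathrm{supp}(\mathbf{x})$, and vary $y_j$ over $\Field_q^*$. The inner product then takes the values $c+a$ for all $a\in\Field_q^*$, with $c$ fixed. If all of these characters were $1$, then $\Tr(c+a)=0$ for every $a\ne 0$, and hence $\Tr(a-b)=0$ for all $a,b\ne 0$. When $q\ge3$, every element of $\Field_q$ is such a difference, which contradicts the surjectivity of $\Tr$.

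\emph{Case $q=2$.} Pick $j\in\mathrm{supp}(\mathbf{x})$ and $k\notin\mathrm{supp}(\mathbf{x})$, which exists because $i\le N-1$. Take $\mathbf{y}$ of weight $l$ with $j\in\mathrm{supp}(\mathbf{y})$ and $k\notin\mathrm{supp}(\mathbf{y})$, and set $\mathbf{y}'=\mathbf{y}+\mathbf{e}_j+\mathbf{e}_k$, which also has weight $l$. Then $\chi_{\mathbf{y}'}(\mathbf{x})=-\chi_{\mathbf{y}}(\mathbf{x})$, so the two characters cannot both equal $1$.

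This construction of $\mathbf{y}$ requires $l<N$. In the boundary case $q=2$, $d=n$ it genuinely breaks: $\Gr{G}$ is a perfect matching, $\gamma_m=(-1)^m$, and so $\gamma_1=\gamma_m$ for odd $m$. I would therefore verify Item~\ref{item: distinct eigenvalues} under the proviso that $(q,d)\ne(2,n)$, and flag that case as an exception to be treated separately.
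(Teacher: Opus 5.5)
Your proposal is correct and follows the paper's route for every item. Item~1 expands $\Adjacency(\Gr{G})=\sum_{i=d}^{n}\A_i$ in the idempotents and uses the generating function at $z=1$ together with the summation formula. Item~3 uses the endpoint values of the Krawtchouk polynomials, Item~4 uses the orthogonality of the $\Ei_j$, and Item~2 uses the character-sum bound.

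The one real difference is the strictness step of Item~2, and there your version is the sound one. The paper argues only that the character $\mathbf{y}\mapsto\omega^{\Tr\langle\mathbf{x},\mathbf{y}\rangle}$ is nontrivial, and therefore cannot be identically $1$ on a weight class. That inference is not valid in general. For $q=2$, the weight-$(n-1)$ class of $\Field_2^{n-1}$ consists only of the all-ones vector, and on it the character equals $1$ for every $\mathbf{x}$ of even weight. Your arguments supply the explicit witness that is actually needed: scaling a coordinate over $\Field_q^*$ for $q\ge 3$, and the swap $\mathbf{y}\mapsto\mathbf{y}+\mathbf{e}_j+\mathbf{e}_k$ for $q=2$.

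The exception you flagged is genuine. For $(q,d)=(2,n)$ with $n\ge 4$ one has $\gamma_1=\gamma_3=-1$, so Item~2 is false as literally stated. The paper's proof avoids this only by assuming $1<d<n$ in its first line, a hypothesis missing from the statement. This does no harm downstream, because Item~2 is invoked only in the two non-edge-transitivity lemmas for the complement, and both assume $d<n$.
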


\begin{proof}
Let $\Gr{G} = \overline{\Gilbert{q,n,d}}$ be the complement of the Gilbert graph, $1<d<n$.
\begin{enumerate}
\item The adjacency matrix of $\Gr{G}$ can be expressed as follows:
\begin{align}
\Adjacency(\Gr{G}) &= \sum_{i=d}^{n} \A_i \nonumber \\
&= \sum_{i=d}^{n} \left(\sum_{j=0}^{n} K_i(j; n,q) \Ei_j\right) \nonumber \\
&= \sum_{j=0}^{n} \left( \sum_{i=d}^{n} K_i(j;n,q) \right) \Ei_j \nonumber \\
&\triangleq \sum_{j=0}^{n} \gamma_j \Ei_j,
\end{align}
where $\{\A_i\}_{i=0}^{n}$ are the distance $i$ matrices of the Hamming scheme, and $\{\Ei_j\}_{j=0}^{n}$
is the dual basis of its Bose-Mesner algebra. The values $\{\gamma_j\}_{j=0}^{n}$ are given by
\begin{align}
\label{eq: gammaj definition}
\gamma_j \triangleq \sum_{i=d}^{n} K_i(j; n,q) = \sum_{i=0}^{n} K_i(j; n,q) - \sum_{i=0}^{d-1} K_i(j; n,q),
\end{align}
and form the spectrum of $\Gr{G}$ with respect to its adjacency matrix.
The largest eigenvalue $\gamma_0$ is the degree of regularity of $\Gr{G}$, i.e.
\begin{align}
\label{eq: gamma0 expression}
\gamma_0 = \Delta(\Gr{G}) = \sum_{i=d}^{n} \binom{n}{i} (q-1)^i.
\end{align}
By evaluating the generating function at $z=1$ from Theorem~\ref{thm: properties of krawtchouk polynomials}, we can conclude that for every $0 < j \le n$,
\begin{align}
\sum_{i=0}^{n} K_i(j; n,q) = 0.
\end{align}
By the Summation formula in Theorem~\ref{thm: properties of krawtchouk polynomials}, for every $1 \le j \le n$,
\begin{align}
\label{eq: gamma j expression}
\gamma_j = - \sum_{i=0}^{d-1} K_i(j; n,q) = - K_{d-1}(j-1;n-1,q).
\end{align}
Together, equalities \eqref{eq: gamma0 expression} and \eqref{eq: gamma j expression} conclude the proof.

\item We compare $\gamma_1$ and $\gamma_m$ for $2 \le m \le n-1$. By \eqref{eq: gamma j expression},
\begin{align}
\gamma_1 - \gamma_m
&= K_{d-1}(m-1;n-1,q) - K_{d-1}(0;n-1,q) \\
&= K_{d-1}(m-1;n-1,q) - (q-1)^{d-1} \binom{n-1}{d-1}.
\end{align}
Let $\mathbf{x} \in \Field_q^{n-1}$ be a vector with Hamming weight $\wt{\mathbf{x}} = m-1$. By the character-theoretic representation of Krawtchouk polynomials (Theorem~\ref{thm: character theory definition}),
\begin{align}
K_{d-1}(m-1;n-1,q)
= \sum_{\mathbf{y} \in \Field_q^{n-1} \,:\, \wt{\mathbf{y}} = d-1}
\omega^{\trace{\inner{\mathbf{x}}{\mathbf{y}}}},
\end{align}
where $\omega = e^{2\pi i/p}$. Since $|\omega^{\trace{\inner{\mathbf{x}}{\mathbf{y}}}}|=1$, we have
\begin{align}
\label{eq: krawtchouk upper bound}
K_{d-1}(m-1;n-1,q)
\le \sum_{\mathbf{y} \in \Field_q^{n-1} \,:\, \wt{\mathbf{y}} = d-1} 1
= (q-1)^{d-1} \binom{n-1}{d-1}.
\end{align}
We claim that the inequality is strict. Indeed, equality would require that
\[
\omega^{\trace{\inner{\mathbf{x}}{\mathbf{y}}}} = 1
\quad \text{for all } \mathbf{y} \text{ with } \wt{\mathbf{y}} = d-1,
\]
i.e., the additive character $\mathbf{y} \mapsto \omega^{\trace{\inner{\mathbf{x}}{\mathbf{y}}}}$ is identically equal to $1$ on this set. However, when $\mathbf{x} \neq 0$, this character is nontrivial (see, e.g., \cite[Ch.~5]{LidlNiederreiter96} or \cite[Ch.~5]{MacWilliamsSloane77}), and therefore it cannot be identically equal to $1$ on all vectors of a given weight. Since $\wt{\mathbf{x}} = m-1 \ge 1$, we conclude that the inequality in \eqref{eq: krawtchouk upper bound} is strict. Hence,
\begin{align}
\gamma_1 - \gamma_m < 0, \qquad \forall\, 2 \le m \le n-1.
\end{align}

\item In the cases where $j=1$ or $j=n$, the eigenvalues $\gamma_1$ and $\gamma_n$ can be expressed explicitly as follows:
\begin{align}
\gamma_1 &= - K_{d-1}(0;n-1,q) = -(q-1)^{d-1} \binom{n-1}{d-1} \\
\gamma_n &= - K_{d-1}(n-1;n-1,q) = (-1)^d \binom{n-1}{d-1}.
\end{align}
Hence,
\begin{align}
\label{eq: gamma 1 equals gamma n}
\gamma_1 = \gamma_n \iff q=2 \text{ and } d \text{ is odd.}
\end{align}

\item \label{subtheorem: part 4}
By Equations~\eqref{eq: orthogonality of primitive idempotents} and \eqref{eq: decomposition}, every polynomial
$p(\Adjacency(\Gr{G})) = \sum_{k=0}^{r} \alpha_k \Adjacency(\Gr{G})^k$ of the adjacency matrix can be expressed
in terms of the orthogonal projections as follows:
\begin{align}
\label{eq: polynomial of adjacency matrix}
p(\Adjacency(\Gr{G})) &= \sum_{k=0}^{r} \alpha_k \Adjacency(\Gr{G})^k
= \sum_{k=0}^{r} \alpha_k \left( \sum_{j=0}^{n} \gamma_j \Ei_j \right)^k
= \sum_{k=0}^{r} \sum_{j=0}^{n} \alpha_k \gamma_j^k \Ei_j = \sum_{j=0}^{n} p(\gamma_j) \Ei_j.
\end{align}
\end{enumerate}
\end{proof}

\begin{remark}
The eigenvalues of the Gilbert graph $\Gr{H} \triangleq \Gilbert{q,n,d}$ are given in \cite{YeZLWYMa2023}:
\begin{align}
\label{eq: spectrum of gilbert}
\bigl(\Delta(\Gr{H}), K_{d-1}(0;n-1,q)-1, \ldots, K_{d-1}(n-1;n-1,q)-1\bigr).
\end{align}
Since $\Gilbert{q,n,d}$ is regular with degree $\Delta(\Gr{H})$, by \cite[Section 1.3.2]{BrouwerHaemers2012}
the eigenvalues of $\overline{\Gilbert{q,n,d}}$ are given as a function of $|\V{\Gr{H}}| = q^n$ and the eigenvalues of $\Gr{H}$ as follows:
\begin{align}
\bigl(q^n - 1 - \Delta(\Gr{H}), -K_{d-1}(0;n-1,q), \ldots, -K_{d-1}(n-1;n-1,q) \bigr).
\end{align}
This expression is consistent with the spectrum of $\overline{\Gilbert{q,n,d}}$ given in Theorem~\ref{thm: spectrum of complement of Gilbert graph}.
Parts~\ref{item: distinct eigenvalues}, \ref{item: equal eigenvalues}, and \ref{item: polynomial expression} of
Theorem~\ref{thm: spectrum of complement of Gilbert graph} will be used in the proof of Theorem~\ref{thm: main result complement}, specifically in
the proof of Lemmata~\ref{lem: complement of gilbert not edge transitive part 1} and~\ref{lem: complement of gilbert not edge transitive part 2}.
\end{remark}

\section{Proof of Theorem~\ref{thm: main result complement}}
\label{sec: transitive complement gilbert graphs}

Let $\Gr{G} = \overline{\Gilbert{q,n,d}}$ be the complement of the Gilbert graph. Since $\Gr{G}$ is a Cayley graph, it is vertex-transitive,
so it is edge-transitive if and only if for every two neighbors of the zero vector, $\mathbf{u},\mathbf{v} \in \Neighbors(\mathbf{0})$, there
exists an automorphism $\varphi \in \Aut(\Gr{G})$ that maps the edge $\{\mathbf{0},\mathbf{u}\}$ to $\{\mathbf{0},\mathbf{v}\}$.
The proof of Theorem~\ref{thm: main result complement} is divided into several lemmas. We first show that $\Gr{G}$ is not edge-transitive under
certain parameter choices. Afterwards, we prove that in the remaining cases $\Gr{G}$ is indeed edge-transitive.

\begin{lemma}
\label{lem: complement of gilbert not edge transitive part 1}
Let $\Gr{G} = \overline{\Gilbert{q,n,d}}$. If $q\ne 2$ and $1 < d < n$, or if $q=2$, $1 < d < n$ and $d$ is even, then $\Gr{G}$ is not edge-transitive.
\end{lemma}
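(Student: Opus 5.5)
The plan is to find a matrix in the Bose--Mesner algebra that is a polynomial in $\Adjacency(\Gr{G})$ but whose entries still detect the Hamming weight of a neighbor of $\mathbf{0}$. Automorphisms of $\Gr{G}$ commute with every polynomial in $\Adjacency(\Gr{G})$, so edge-transitivity forces such a matrix to take a single value on all edges. We then exhibit two edges on which it takes different values. The natural candidate is the primitive idempotent $\Ei_1$, since by Theorem~\ref{thm: P and Q matrices of Hamming scheme} its entries are governed by the degree-one Krawtchouk polynomial
\[
\Ei_1=\frac{1}{q^n}\sum_{i=0}^n K_1(i;n,q)\,\A_i, \qquad K_1(i;n,q)=(q-1)n-qi,
\]
and $K_1(\cdot\,;n,q)$ is strictly decreasing in $i$.

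\emph{Step 1: $\Ei_1$ is a polynomial in $\Adjacency(\Gr{G})$.} By Part~\ref{item: polynomial expression} of Theorem~\ref{thm: spectrum of complement of Gilbert graph}, it suffices that $\gamma_1\neq\gamma_j$ for all $j\neq 1$. Then the Lagrange interpolation polynomial $p$ with $p(\gamma_1)=1$ and $p(\gamma_j)=0$ for every $\gamma_j\ne\gamma_1$ gives $p(\Adjacency(\Gr{G}))=\Ei_1$. We check the required distinctness case by case:
\begin{itemize}
\item[(i)] $\gamma_0=\Delta(\Gr{G})>0$, whereas $\gamma_1=-(q-1)^{d-1}\binom{n-1}{d-1}<0$ because $d\le n$.
\item[(ii)] $\gamma_1<\gamma_m$ for $2\le m\le n-1$ by Part~\ref{item: distinct eigenvalues}.
\item[(iii)] $\gamma_1\neq\gamma_n$ by Part~\ref{item: equal eigenvalues}, precisely because the hypotheses exclude the case ``$q=2$ and $d$ odd''.
\end{itemize}
This is the only place the hypotheses on $q$ and $d$ enter, and I expect it to be the crux. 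It is, however, already prepared by Theorem~\ref{thm: spectrum of complement of Gilbert graph}.

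\emph{Step 2: invariance under automorphisms.} Let $\varphi\in\Aut(\Gr{G})$ with permutation matrix $\mathbf{P}_\varphi$. Then $\mathbf{P}_\varphi$ commutes with $\Adjacency(\Gr{G})$, hence with $\Ei_1=p(\Adjacency(\Gr{G}))$. Consequently $(\Ei_1)_{\varphi(\mathbf{x}),\varphi(\mathbf{y})}=(\Ei_1)_{\mathbf{x},\mathbf{y}}$ for all $\mathbf{x},\mathbf{y}$. Suppose $\Gr{G}$ is edge-transitive, and let $\mathbf{u},\mathbf{v}\in\Neighbors(\mathbf{0})$. Some automorphism maps $\{\mathbf{0},\mathbf{u}\}$ onto $\{\mathbf{0},\mathbf{v}\}$, in one orientation or the other. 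Since $\Ei_1$ is symmetric, both orientations give $(\Ei_1)_{\mathbf{0},\mathbf{u}}=(\Ei_1)_{\mathbf{0},\mathbf{v}}$. (Alternatively, one can reduce to automorphisms fixing $\mathbf{0}$ as in the proof of Lemma~\ref{lem: gilbert only if direction}.)

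\emph{Step 3: contradiction.} The neighbors of $\mathbf{0}$ in $\Gr{G}$ are exactly the vectors of weight $d,\dots,n$. Since $d<n$, we may pick $\mathbf{u}$ of weight $d$ and $\mathbf{v}$ of weight $n$. Then
\[
(\Ei_1)_{\mathbf{0},\mathbf{u}}=q^{-n}K_1(d;n,q)\neq q^{-n}K_1(n;n,q)=(\Ei_1)_{\mathbf{0},\mathbf{v}},
\]
since $K_1(d;n,q)-K_1(n;n,q)=q(n-d)>0$. This contradicts Step~2, so $\Gr{G}$ is not edge-transitive.
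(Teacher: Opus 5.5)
Your proposal is correct and follows the paper's proof essentially step for step: you realize $\Ei_1$ as a Lagrange-interpolation polynomial in $\Adjacency(\Gr{G})$ using Parts~2 and~3 of Theorem~\ref{thm: spectrum of complement of Gilbert graph}, and then compare its entries, which automorphisms preserve and which are governed by the strictly decreasing $K_1(i;n,q)$, on two edges at $\mathbf{0}$. The differences are cosmetic. You separate $\gamma_0$ from $\gamma_1$ by a sign argument rather than by connectivity and Perron--Frobenius, you compare weights $d$ and $n$ instead of $n$ and $n-1$, and you explicitly handle the two possible orientations of the edge map via the symmetry of $\Ei_1$.
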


\begin{proof}
By the connectivity of the graph $\Gr{G}$, the largest eigenvalue $\gamma_0$ of its adjacency matrix is simple (see, e.g., \cite[Proposition~12.1.1]{CioabaM22});
in particular, $\gamma_1 < \gamma_0$.
By Part~2 of Theorem~\ref{thm: spectrum of complement of Gilbert graph}, it also holds that $\gamma_1 < \gamma_m$ for all $m \in \{2, \ldots, n-1\}$.
Furthermore, under the hypotheses of the lemma that $q \neq 2$ or $d$ is even if $q=2$, Part~3 of Theorem~\ref{thm: spectrum of complement of Gilbert graph}
ensures that $\gamma_1 \neq \gamma_n$. Hence, the polynomial $p \colon \Reals \to \Reals$ given by
\begin{align}
p(t) \triangleq \prod_{m \neq 1} \frac{t - \gamma_m}{\gamma_1 - \gamma_m}, \qquad t \in \Reals,
\end{align}
is well defined under the hypotheses of the lemma. By construction, it satisfies $p(\gamma_1) = 1$ and $p(\gamma_m) = 0$ for every $m \neq 1$ (i.e.,
$m \in \{0, 2, \ldots, n\}$).
Consequently, by Theorem~\ref{thm: spectrum of complement of Gilbert graph} (see \eqref{eq:12.02.26}), we obtain
\begin{align}
p(\Adjacency(\Gr{G})) = \Ei_1,
\end{align}
whose explicit expression is given by
\begin{align}
\Ei_1 = \frac{1}{q^n} \sum_{i=0}^{n} K_1(i;n,q) \A_i.
\end{align}
Therefore, for every two vertices $\mathbf{u},\mathbf{v} \in \Field_q^n$ with Hamming distance $\wt{\mathbf{u},\mathbf{v}} = i$, the entry
that corresponds to $(\mathbf{u},\mathbf{v})$ in the matrix $\Ei_1$ is given by
\begin{align}
\label{eq: E1 uv entry}
(\Ei_1)_{\mathbf{u},\mathbf{v}} = \frac{K_1(i;n,q)}{q^n} = \frac{(q-1)(n - i) - i}{q^n}.
\end{align}

Suppose, to the contrary, that $\Gr{G}$ is edge-transitive. Let $\mathbf{u},\mathbf{v} \in \Neighbors(\mathbf{0})$ be two vertices such that
$\wt{\mathbf{u}} = n$ and $\wt{\mathbf{v}} = n-1$. (Indeed, since $\Gr{G}$ is the complement of $\Gilbert{q,n,d}$ with $d < n$, it follows
from the assumptions of the lemma that both $\mathbf{u}$ and $\mathbf{v}$ are adjacent to the vertex corresponding to the all-zero vector.)
By the edge transitivity assumption, there exists an automorphism $\varphi \in \Aut(\Gr{G})$ that maps the edge $\{\mathbf{0},\mathbf{u}\}$
to $\{\mathbf{0},\mathbf{v}\}$. Let $P_{\varphi}$ be the permutation matrix that corresponds to an automorphism $\varphi$ in $\Gr{G}$. Then,
\begin{align}
\label{eq: E1 = P E1 P^-1}
\Ei_1 = p(\Adjacency(\Gr{G})) = P_{\varphi} p(\Adjacency(\Gr{G})) P_{\varphi}^{-1} = P_\varphi \Ei_1 P_\varphi^{-1},
\end{align}
which implies that the following relation is satisfied:
\begin{align}
\label{eq: E1 uv - E1 vv = 0}
(\Ei_1)_{\mathbf{0},\mathbf{v}} - (\Ei_1)_{\mathbf{0},\mathbf{u}} = 0.
\end{align}
Setting $i=n$ and $i=n-1$ in \eqref{eq: E1 uv entry} (as the Hamming weights of $\mathbf{u}$ and $\mathbf{v}$, respectively) gives
\begin{align}
&(\Ei_1)_{\mathbf{0},\mathbf{u}} = -\frac{n}{q^n} \\[0.1cm]
&(\Ei_1)_{\mathbf{0},\mathbf{v}} = \frac{q - n}{q^n} \\
\implies &(\Ei_1)_{\mathbf{0},\mathbf{v}} - (\Ei_1)_{\mathbf{0},\mathbf{u}} = \frac{1}{q^{n-1}} \ne 0,
\end{align}
contradicting equation \eqref{eq: E1 uv - E1 vv = 0}. Hence, $\Gr{G}$ cannot be edge-transitive under the hypotheses of
Lemma~\ref{lem: complement of gilbert not edge transitive part 1}.
\end{proof}

\begin{lemma}
\label{lem: complement of gilbert not edge transitive part 2}
Let $\Gr{G} = \overline{\Gilbert{q,n,d}}$. If $q=2$ and $d < n-1$ is an odd number, then $\Gr{G}$ is not edge-transitive.
\end{lemma}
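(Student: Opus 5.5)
The approach follows the proof of Lemma~\ref{lem: complement of gilbert not edge transitive part 1}, with one change. Under the present hypotheses ($q=2$, $d$ odd), Part~\ref{item: equal eigenvalues} of Theorem~\ref{thm: spectrum of complement of Gilbert graph} gives $\gamma_1=\gamma_n$. So $\Ei_1$ by itself cannot be isolated as a polynomial in $\Adjacency(\Gr{G})$. Instead I will isolate the sum $\Ei_1+\Ei_n$.

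\textbf{Step 1: build the polynomial.} By connectivity, $\gamma_0$ is simple, so $\gamma_1<\gamma_0$. By Part~\ref{item: distinct eigenvalues} of Theorem~\ref{thm: spectrum of complement of Gilbert graph}, $\gamma_1<\gamma_m$ for $2\le m\le n-1$. Since $\gamma_n=\gamma_1$, both $\gamma_1$ and $\gamma_n$ differ from every $\gamma_m$ with $m\in\{0,2,\ldots,n-1\}$. Define
\[
p(t)=\prod_{m\in\{0,2,\ldots,n-1\}}\frac{t-\gamma_m}{\gamma_1-\gamma_m}.
\]
Then $p(\gamma_1)=p(\gamma_n)=1$ and $p(\gamma_m)=0$ for all other $m$. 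By Part~\ref{item: polynomial expression} of Theorem~\ref{thm: spectrum of complement of Gilbert graph}, $p(\Adjacency(\Gr{G}))=\Ei_1+\Ei_n$. Every automorphism of $\Gr{G}$ commutes with $\Adjacency(\Gr{G})$, hence with this matrix. Therefore, if $\Gr{G}$ were edge-transitive, the entry $(\Ei_1+\Ei_n)_{\mathbf{0},\mathbf{w}}$ would be the same for every $\mathbf{w}\in\Neighbors(\mathbf{0})$. This uses the same reduction as before: an automorphism mapping the edge $\{\mathbf{0},\mathbf{u}\}$ to $\{\mathbf{0},\mathbf{v}\}$, together with symmetry of the matrix.

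\textbf{Step 2: compute the entries.} By Theorem~\ref{thm: P and Q matrices of Hamming scheme}, for $\wt{\mathbf{w}}=i$ we have
\[
(\Ei_1+\Ei_n)_{\mathbf{0},\mathbf{w}}=\frac{K_1(i)+K_n(i)}{2^n}=\frac{f(i)}{2^n},\qquad f(i)\triangleq n-2i+(-1)^i,
\]
using $K_1(i)=n-2i$ and $K_n(i)=(-1)^i$ in the binary case.

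\textbf{Step 3: choose two neighbors of $\mathbf{0}$ with different entries.} Since $d\le n-2$, every vector of weight $n-2$, $n-1$ or $n$ is a neighbor of $\mathbf{0}$ in $\Gr{G}$.
\begin{itemize}
\item If $n$ is odd, compare weights $n$ and $n-1$: $f(n)=-n-1$ and $f(n-1)=-n+3$.
\item If $n$ is even, compare weights $n-1$ and $n-2$: $f(n-1)=-n+1$ and $f(n-2)=-n+5$.
\end{itemize}
In both cases the two values differ, which contradicts edge-transitivity.

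\textbf{Main obstacle.} The only delicate point is the choice of weights in Step~3. For one parity of $n$, the naive comparison of weights $n$ and $n-1$ (the pair used in Lemma~\ref{lem: complement of gilbert not edge transitive part 1}) gives equal values of $f$. This is exactly why the hypothesis $d<n-1$ is needed: it makes weight $n-2$ available as a neighbor of $\mathbf{0}$. This matches the exceptional case $(q,d)=(2,n-1)$ with $n$ even in Theorem~\ref{thm: main result complement}.
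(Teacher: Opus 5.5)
Your proposal is correct and follows essentially the same route as the paper: the same Lagrange-type polynomial isolating $\Ei_1+\Ei_n$, and the same comparison of the entries $(\Ei_1+\Ei_n)_{\mathbf{0},\mathbf{w}}$ over two neighbors of $\mathbf{0}$. The one difference is the choice of test vertices. The paper compares weights $n$ and $n-2$, which works for both parities at once because $K_n(n)=K_n(n-2)=(-1)^n$, so the entries always differ by $4/2^n$. This avoids your case split on the parity of $n$.
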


\begin{proof}
The proof is similar to that of Lemma~\ref{lem: complement of gilbert not edge transitive part 1}.
By Theorem~\ref{thm: spectrum of complement of Gilbert graph}, the polynomial
\begin{align}
p(t) \triangleq \prod_{m \notin \{1,n\}} \frac{t - \gamma_m}{\gamma_1 - \gamma_m}
\end{align}
is well defined. Evaluating the polynomial at $\gamma_i$ for $0 \le i \le n$ gives
$p(\gamma_1) = p(\gamma_n) = 1$ and $p(\gamma_m) = 0$ for every $m \notin \{1,n\}$.
Thus, by equality~\eqref{eq:12.02.26},
\begin{align}
p(\Adjacency(\Gr{G})) = \Ei_1 + \Ei_n.
\end{align}
An explicit expression for $\Ei_n$ is given by
\begin{align}
\Ei_n &= \frac{1}{2^n} \sum_{i=0}^{n} K_n(i) \A_i \nonumber \\
\label{eq: En expression}
&= \frac{1}{2^n} \sum_{i=0}^{n} (-1)^i \A_i.
\end{align}
where equality \eqref{eq: En expression} holds since, by \eqref{eq: binary},
\[
K_n(i) := K_n(i;n,2) = (-1)^i, \qquad i \in \{0, 1, \ldots, n\}.
\]
Let $\mathbf{u}, \mathbf{v}\in \Neighbors(\Zero)$ be two vertices such that $\wt{\mathbf{u}} = n$ and $\wt{\mathbf{v}} = n-2$
(note that, since $d<n-1$, both $\mathbf{u}$ and $\mathbf{v}$ are neighbors of the vertex corresponding to the all-zero vector
in $\Gr{G} = \overline{\Gilbert{q,n,d}}$).
Suppose to the contrary that $\Gr{G}$ is edge-transitive. Then, there exists an automorphism $\varphi \in \Aut(\Gr{G})$ that maps
the edge $\{\Zero,\mathbf{u}\}$ to the edge $\{\Zero,\mathbf{v}\}$. Let $P_{\varphi}$ be the permutation matrix corresponding to
the automorphism $\varphi$. Since $\varphi$ is a graph automorphism, we get
\begin{align}
\label{eq: E1+En = P (E1+En)}
\Ei_1 + \Ei_n = p(\Adjacency(\Gr{G})) = P_{\varphi} p(\Adjacency(\Gr{G})) P_{\varphi}^{-1} = P_\varphi (\Ei_1 + \Ei_n) P_\varphi^{-1}.
\end{align}
Hence, the following relation must hold:
\begin{align}
\label{eq: E1+En uv - E1+En vv = 0}
(\Ei_1 + \Ei_n)_{\Zero,\mathbf{v}} - (\Ei_1 + \Ei_n)_{\Zero,\mathbf{u}} = 0.
\end{align}
However, by equations \eqref{eq: E1 uv entry}, \eqref{eq: En expression}, and \eqref{eq: E1+En = P (E1+En)}, we obtain
\begin{align}
&(\Ei_1 + \Ei_n)_{\Zero,\mathbf{u}} = \frac{(n - 2n) + (-1)^{n}}{2^n} = \frac{-n + (-1)^{n}}{2^n}, \\
&(\Ei_1 + \Ei_n)_{\Zero,\mathbf{v}} = \frac{(n - 2(n-2)) + (-1)^{n-2}}{2^n} = \frac{-n + 4 + (-1)^{n}}{2^n}, \\[0.1cm]
\implies &(\Ei_1 + \Ei_n)_{\Zero,\mathbf{v}} - (\Ei_1 + \Ei_n)_{\Zero,\mathbf{u}} = 2^{-(n-2)},
\end{align}
which contradicts equation \eqref{eq: E1+En uv - E1+En vv = 0}. Hence, the graph $\Gr{G}$ cannot be edge-transitive.
\end{proof}

\begin{lemma}
\label{lem: gilbert complement distance transitivity n}
Let $\Gr{G}=\overline{\Gilbert{q,n,n}}$ be the complement of the Gilbert graph with $d=n$. Then,
\begin{enumerate}
\item $\Gr{G}$ is edge-transitive.
\item It is distance transitive if and only if $q=2$ or $n=2$.
\end{enumerate}
\end{lemma}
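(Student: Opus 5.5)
The plan is to identify $\Gr{G}=\overline{\Gilbert{q,n,n}}$ concretely. Two vertices $\mathbf{x},\mathbf{y}\in\Field_q^n$ are adjacent in $\Gr{G}$ if and only if their Hamming distance is $n$, i.e., they differ in \emph{every} coordinate. So $\Gr{G}$ is the direct (categorical) product of $n$ copies of $K_q$. Both parts then follow from exhibiting explicit automorphisms and, for the negative direction, a counting invariant in the spirit of the proof of Lemma~\ref{lem: gilbert only if direction}.

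\textbf{Automorphisms and edge-transitivity.} For each coordinate $j$ and each permutation $\sigma_j$ of $\Field_q$, the map $\mathbf{x}\mapsto(\sigma_1(x_1),\ldots,\sigma_n(x_n))$ is an automorphism, because it preserves whether $x_j\neq y_j$ in every coordinate. Permutations of the coordinates are automorphisms as well. Since $\Gr{G}$ is vertex-transitive (it is a Cayley graph), it suffices, as noted at the start of this section, to map an edge $\{\mathbf{0},\mathbf{u}\}$ to $\{\mathbf{0},\mathbf{v}\}$ for any two full-weight vectors $\mathbf{u},\mathbf{v}$. For this I take $\sigma_j$ to be the transposition of $u_j$ and $v_j$ (or the identity if they are equal). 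Each $\sigma_j$ fixes $0$, since $u_j,v_j\neq 0$, and the product map sends $\mathbf{u}$ to $\mathbf{v}$. This proves Item~1.

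\textbf{Distance-transitivity when $q=2$ or $n=2$.} If $q=2$, then $\Gr{G}$ is a perfect matching $\{\mathbf{x},\overline{\mathbf{x}}\}$. Any bijection permuting the matching edges, combined with swaps inside edges, is an automorphism. Hence any ordered pair at distance $1$ can be mapped to any other, and likewise any ordered pair lying in different components (distance $\infty$). The graph is disconnected here, so I will remark that distance-transitivity is meant in this formal sense. If $n=2$ and $q\ge 3$, the graph is connected of diameter $2$. Fixing $\mathbf{0}$, the vertices at distance $2$ are exactly those of weight $1$. The stabilizer of $\mathbf{0}$ is transitive on them: coordinate permutations move the support, and the $\sigma_j$ fixing $0$ move the nonzero symbol. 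Together with Item~1 and vertex-transitivity, this gives distance-transitivity.

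\textbf{Failure for $q\ge3$, $n\ge3$.} Here any two vertices have a common neighbor: in each coordinate choose a symbol different from both, which is possible since $q\ge3$. So $\Gr{G}$ is connected with diameter $2$, and the vertices at distance $2$ from $\mathbf{0}$ are exactly those of weight $w\in\{1,\ldots,n-1\}$. The number of common neighbors of $\mathbf{0}$ and a vertex $\mathbf{x}$ of weight $w$ is $(q-2)^w(q-1)^{n-w}$. Distance-transitivity would require this count to be the same for $w=1$ and $w=n-1$, which are distinct because $n\ge3$. That amounts to $(q-1)^{n-2}=(q-2)^{n-2}$, which is impossible for $n\ge 3$. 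The main care point is the degenerate disconnected case $q=2$, where I must be explicit about the convention for infinite distance; the rest is routine.
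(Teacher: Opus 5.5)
Your proof is correct and follows essentially the same route as the paper. Coordinatewise alphabet maps fixing $\mathbf{0}$ give edge-transitivity and the $n=2$ case (you use transpositions of $u_j,v_j$, where the paper multiplies by $u_j^{-1}v_j$). The perfect-matching structure handles $q=2$, and the common-neighbour count $(q-2)(q-1)^{n-1}\neq(q-1)(q-2)^{n-1}$ for weights $1$ and $n-1$ rules out $q,n\ge 3$. Your explicit check that $\Gr{G}$ has diameter $2$ when $q\ge 3$, and your remark on the infinite-distance convention for the disconnected case $q=2$, fill in points the paper leaves implicit.
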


\begin{proof}
We first start by proving that  $\Gr{G}$  is edge-transitive, followed by the proof that $\Gr{G}$ is
distance-transitive if $q=2$ or $n=2$. Then we show that if the latter condition is violated, then
$\Gr{G}$ is not distance-transitive.
\begin{enumerate}
\item If $q=2$, the only neighbor of $\mathbf{0}$ is $\mathbf{1}_n$.
Hence, $\Gr{G}$ is trivially edge-transitive. 

\noindent 
If $q \geq 3$, let $\mathbf{u}, \mathbf{v} \in \Field_q^n$ have all entries nonzero.
Define $\varphi \colon \Field_q^n \to \Field_q^n$ by
\[
\varphi(x_1, \dots, x_n) = (u_1^{-1} x_1 v_1, \dots, u_n^{-1} x_n v_n).
\]
Then $\varphi$ is a bijection with inverse
\[
\varphi^{-1}(y_1, \dots, y_n)
= (u_1 y_1 v_1^{-1}, \dots, u_n y_n v_n^{-1}).
\]
Each coordinate is multiplied by a nonzero field element.
Hence, Hamming distances are preserved. Therefore, $\varphi \in \Aut(\Gr{G})$
(i.e., the mapping $\varphi$ is a graph automorphism of $\Gr{G}$).
Moreover, $\varphi(\mathbf{0}) = \mathbf{0}$ and $\varphi(\mathbf{u}) = \mathbf{v}$.
Thus, $\{\mathbf{0}, \mathbf{u}\}$ maps to $\{\mathbf{0}, \mathbf{v}\}$.
Hence, $\Gr{G}$ is edge-transitive.

\item If $q=2$, then $\Gr{G}$ is a disjoint union of $K_2$'s.
Thus, it is distance-transitive.

\smallskip
\noindent 
Assume $q \ge 3$, and consider the cases where $n=2$ or $n \geq 3$ separately.

\noindent
\textbf{Case $n=2$.}
The vertices at distance $2$ from $\mathbf{0}$ are those of weight $1$.
For any two such vertices $u,v \in \mathbb{F}_q^n$, there exists a coordinate
permutation $\pi$ such that $u$ and $\pi(v)$ have the same support.
Composing with $\varphi$ gives distance-transitivity.

\noindent
\textbf{Case $n>2$.}
We show that $\Gr{G}$ is not distance-transitive. Let
\[
\mathbf{u} = (1,0,0,\dots,0), \qquad
\mathbf{v} = (0,1,1,\dots,1).
\]
Both are at distance~$2$ from $\mathbf{0}$.
Assume, for contradiction, that $\Gr{G}$ is distance-transitive.
Then, there exists $\varphi \in \Aut(\Gr{G})$ fixing $\mathbf{0}$ and mapping $\mathbf{u}$ to $\mathbf{v}$.
Define
\[
C(\mathbf{x},\mathbf{y}) = |\Neighbors(\mathbf{x}) \cap \Neighbors(\mathbf{y})|.
\]
In a distance-transitive graph, $C(\mathbf{x},\mathbf{y})$ depends only on $d(\mathbf{x},\mathbf{y})$.
Hence,
\[
C(\mathbf{0},\mathbf{u}) = C(\mathbf{0},\mathbf{v}).
\]

\noindent
\textbf{Computation of $C(\mathbf{0},\mathbf{u})$.}
A common neighbor $z$ satisfies $z_i \neq 0$ for all $i \in [n]$, and $z_1 \neq 1$. Thus,
\[
z_1 \in \Field_q \setminus \{0,1\}, \quad z_i \in \Field_q \setminus \{0\} \ (i \ge 2).
\]
Hence,
\[
C(\mathbf{0},\mathbf{u}) = (q-2)(q-1)^{n-1}.
\]

\noindent
\textbf{Computation of $C(\mathbf{0},\mathbf{v})$.}
Now $z$ satisfies $z_i \neq 0$ for all $i$, and $z_i \neq 1$ for $i \ge 2$. Thus,
\[
z_1 \in \Field_q \setminus \{0\}, \quad
z_i \in \Field_q \setminus \{0,1\} \ (i \ge 2).
\]
Hence,
\[
C(\mathbf{0},\mathbf{v}) = (q-1)(q-2)^{n-1}.
\]
For $n>2$, $C(\mathbf{0},\mathbf{u}) \neq C(\mathbf{0},\mathbf{v})$, which leads to a contradiction.
Therefore, $\Gr{G}$ is not distance-transitive.
\end{enumerate}
\end{proof}

\begin{lemma}
\label{lem: gilbert complement distance transitivity n-1}
If $(q,d) = (2,n-1)$ and $n$ is even, then $\Gr{G} = \overline{\Gilbert{2,n,n-1}}$ is distance-transitive,
and hence edge-transitive.
\end{lemma}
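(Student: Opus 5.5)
Our plan is to identify $\overline{\Gilbert{2,n,n-1}}$ with a known distance-transitive graph. In this graph, $\mathbf{x}\sim\mathbf{y}$ if and only if $\Dh(\mathbf{x},\mathbf{y})\in\{n-1,n\}$. Consider the map $\mathbf{x}\mapsto\mathbf{x}\oplus\mathbf{1}_n$ restricted to half of the coordinates in a suitable way. A cleaner route is to compose with complementation of one distance class. Let $\parity(\mathbf{x})$ denote the parity of $\wt{\mathbf{x}}$, and define the bijection
\begin{align}
\sigma(\mathbf{x}) = \begin{cases} \mathbf{x} & \text{if } \wt{\mathbf{x}} \text{ is even},\\ \mathbf{x}\oplus\mathbf{1}_n & \text{if } \wt{\mathbf{x}} \text{ is odd}.\end{cases}
\end{align}
Since $n$ is even, complementation preserves parity, so $\sigma$ is an involution. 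We then check how $\sigma$ transforms Hamming distances. If $\mathbf{x},\mathbf{y}$ have the same parity, their distance is unchanged. If they have different parities, a distance $i$ becomes $n-i$. For opposite parity the distance $i$ is odd, so $i=n-1$ becomes $1$ and $i=n$ cannot occur (since $n$ is even). For equal parity the distance $i$ is even, so $i=n$ stays $n$ and $i=n-1$ cannot occur. Hence $\sigma$ is an isomorphism from $\overline{\Gilbert{2,n,n-1}}$ onto the graph $\Gr{H}$ on $\Field_2^n$ in which $\mathbf{x}\sim\mathbf{y}$ if and only if $\Dh(\mathbf{x},\mathbf{y})\in\{1,n\}$. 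This is the folded-cube-type graph: the $n$-cube with antipodal edges added, i.e.\ $\Cayley{\Field_2^n}{\{\mathbf{e}_1,\dots,\mathbf{e}_n,\mathbf{1}_n\}}$.

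Next, we show that $\Gr{H}$ is distance-transitive. One option is to invoke the known fact that the \emph{folded $(n+1)$-cube} is distance-transitive \cite[Table~6.1]{BrouwerCN1989}. Indeed, $\Gr{H}$ is isomorphic to the folded $(n+1)$-cube via $\Field_2^n\cong\Field_2^{n+1}/\langle\mathbf{1}_{n+1}\rangle$, $\mathbf{x}\mapsto[(\mathbf{x},0)]$. Under this map the $n+1$ generators $\mathbf{e}_1,\dots,\mathbf{e}_{n+1}$ of the folded cube correspond to $\mathbf{e}_1,\dots,\mathbf{e}_n,\mathbf{1}_n$.

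Alternatively, to be self-contained, we would argue directly. The symmetric group $S_{n+1}$ acts on $\Field_2^{n+1}/\langle\mathbf{1}\rangle$ by permuting coordinates, fixing $[\mathbf{0}]$ and permuting the connection set. The distance from $[\mathbf{0}]$ to $[\mathbf{z}]$ is $\min(\wt{\mathbf{z}},n+1-\wt{\mathbf{z}})$. Two classes at the same distance $k$ have representatives of the same weight $k\le (n+1)/2$, so a coordinate permutation maps one to the other. Combined with the translations (vertex-transitivity), this gives distance-transitivity, and edge-transitivity follows.

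The main point needing care is the parity bookkeeping in the isomorphism $\sigma$. This is exactly where evenness of $n$ is used: it ensures $\sigma$ is well defined as an involution and that the distance classes $n-1$ and $n$ fall on the correct parity sides. We would verify this carefully; the rest is routine.
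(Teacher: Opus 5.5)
Your proof is correct, and it takes a different route from the paper's. You build an explicit isomorphism $\sigma$ from $\overline{\Gilbert{2,n,n-1}}$ onto the folded $(n+1)$-cube. Distance-transitivity then comes from the coordinate action of $S_{n+1}$ on $\Field_2^{n+1}/\langle \mathbf{1}_{n+1}\rangle$, and the layer structure $\min(w,n+1-w)$ and the diameter $n/2$ need no computation.

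The paper stays in $\Field_2^n$:
\begin{itemize}
\item it computes the distance layers from $\mathbf{0}$ by hand (each layer is two consecutive weights, $\{n-k,n-k+1\}$ for odd $k$ and $\{k-1,k\}$ for even $k$);
\item it uses coordinate permutations for vectors of equal weight;
\item it bridges the two weights in a layer with the ad hoc involution $\varphi(\mathbf{x})=(x_1,\ldots,x_{n-1},\parity(\mathbf{x}))$.
\end{itemize}

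The two arguments use the same symmetries. Under your $\sigma$ and the identification $\mathbf{x}\mapsto[(\mathbf{x},0)]$, the paper's $\varphi$ becomes exactly the transposition of coordinates $n$ and $n+1$ of the folded cube. One checks that $\sigma\varphi\sigma$ fixes $\mathbf{e}_1,\ldots,\mathbf{e}_{n-1}$ and swaps $\mathbf{e}_n\leftrightarrow\mathbf{1}_n$. So $S_n$ together with $\varphi$ generates your $S_{n+1}$. Your route explains where the extra automorphism comes from and places the graph in a classical distance-transitive family. The paper's route is more hands-on and avoids the quotient construction.

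A small simplification: $\sigma(\mathbf{x})=\mathbf{x}+\parity(\mathbf{x})\,\mathbf{1}_n$ is $\Field_2$-linear. Your parity bookkeeping can therefore be replaced by a one-line check. For even $n$, $\sigma$ maps the connection set $\{\mathbf{1}_n\}\cup\{\mathbf{1}_n+\mathbf{e}_j : 1\le j\le n\}$ bijectively onto $\{\mathbf{1}_n,\mathbf{e}_1,\ldots,\mathbf{e}_n\}$, and this handles edges and non-edges at once. Because you also give the self-contained $S_{n+1}$ argument, the proof does not depend on the citation for folded cubes.
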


\begin{proof}
The distance layers from $\mathbf{0}$ are determined by Hamming weights.
In particular, the diameter of $\Gr{G}$ is $\lceil n/2 \rceil$, and the
distance-$k$ layer consists of all vectors $\mathbf{v}$ with
\[
\wt{\mathbf{v}} \in
\begin{cases}
\{n-k+1,\, n-k\}, & \text{if $k$ is odd},\\
\{k-1,\, k\}, & \text{if $k$ is even}.
\end{cases}
\]
Since $n$ is even, any two vectors in the same layer have weights that are either equal or differ by $1$.
Let $1 \le k \le \frac{n}{2}$, and let $\mathbf{u}, \mathbf{v} \in \V{\Gr{G}}$ satisfy
\[
\DG(\mathbf{0}, \mathbf{u}) = \DG(\mathbf{0}, \mathbf{v}) = k.
\]
If $\wt{\mathbf{u}} = \wt{\mathbf{v}}$, then there exists a coordinate permutation $\pi$ that maps
$\mathbf{u}$ to $\mathbf{v}$. Since $\pi$ fixes $\mathbf{0}$, it is an automorphism that maps
$\{\mathbf{u}, \mathbf{0} \}$ to $\{\mathbf{v}, \mathbf{0}\}$.

Assume now that $\wt{\mathbf{u}} \ne \wt{\mathbf{v}}$. Since $n$ is even, we may assume
\[
\wt{\mathbf{u}} = 2i, \qquad \wt{\mathbf{v}} = 2i-1,
\]
for some $1 \le i \le \frac{n}{2}$. Since coordinate permutations are automorphisms of $\Gr{G}$,
it suffices to construct an automorphism mapping
$(\mathbf{0}, \tilde{\mathbf{u}})$ to $(\mathbf{0}, \tilde{\mathbf{v}})$, where
\begin{align}
\tilde{\mathbf{u}} &= (\AllOne_{2i-1}, \Zero_{n-2i}, 1), \\
\tilde{\mathbf{v}} &= (\AllOne_{2i-1}, \Zero_{n-2i}, 0).
\end{align}
Define $\varphi \colon \Field_2^n \to \Field_2^n$ by
\begin{align}
\varphi(x_1, \ldots, x_n) &= (x_1, \ldots, x_{n-1}, \parity(\mathbf{x})), \\
\parity(\mathbf{x}) &= \wt{\mathbf{x}} \bmod 2.
\end{align}
Then $\varphi$ is an involution, and hence bijective. Moreover, $\varphi(\mathbf{0}) = \mathbf{0}$.
We show that $\varphi$ is an automorphism of $\Gr{G}$. Let $\mathbf{x}, \mathbf{y} \in \Field_2^n$.
\begin{itemize}
\item If $\Dh(\mathbf{x}, \mathbf{y}) = n$, then the first $n-1$ coordinates of $\varphi(\mathbf{x})$
and $\varphi(\mathbf{y})$ differ. Hence,
\[
\Dh(\varphi(\mathbf{x}), \varphi(\mathbf{y})) \ge n-1,
\]
so $\{\varphi(\mathbf{x}), \varphi(\mathbf{y})\} \in \E{\Gr{G}}$.

\item If $\Dh(\mathbf{x}, \mathbf{y}) = n-1$, then $x_k = y_k$ for a unique $k$. Since $n$ is even,
\[
\parity(\mathbf{x}) + \parity(\mathbf{y}) = \parity(\mathbf{x} + \mathbf{y}) = n-1 \equiv 1 \pmod{2},
\]
so $\parity(\mathbf{x}) \ne \parity(\mathbf{y})$. Hence,
\[
(\varphi(\mathbf{x}))_i \ne (\varphi(\mathbf{y}))_i \quad \text{for all } i \ne k,
\]
and therefore
\[
\Dh(\varphi(\mathbf{x}), \varphi(\mathbf{y})) \ge n-1.
\]
Thus, $\{\varphi(\mathbf{x}), \varphi(\mathbf{y})\} \in \E{\Gr{G}}$.
\end{itemize}
Since $\varphi$ is bijective, it follows that $\varphi \in \Aut(\Gr{G})$.
Finally, since $\wt{\tilde{\mathbf{u}}} = 2i$ and $\wt{\tilde{\mathbf{v}}} = 2i-1$, we have
$\parity(\tilde{\mathbf{u}}) = 0$ and $\parity(\tilde{\mathbf{v}}) = 1$. Hence,
$\varphi(\tilde{\mathbf{u}}) = \tilde{\mathbf{v}}$. This completes the proof of the lemma.
\end{proof}

\subsection*{Completion of the proof of Theorem~\ref{thm: main result complement}}
Summarizing Lemmas~\ref{lem: complement of gilbert not edge transitive part 1}--\ref{lem: gilbert complement distance transitivity n-1},
we obtain the following:
\begin{enumerate}
\item If $d=n$, then $\overline{\Gilbert{q,n,d}}$ is edge-transitive.
In this case, it is distance-transitive if and only if $q=2$ or $n=2$
(Lemma~\ref{lem: gilbert complement distance transitivity n}).

\item If $d=n-1$ and $n$ is even, then $\overline{\Gilbert{q,n,d}}$ is distance-transitive,
and hence edge-transitive (Lemma~\ref{lem: gilbert complement distance transitivity n-1}).

\item In all remaining cases, the graph is not edge-transitive.
Specifically, this holds if $q \ne 2$ and $1 < d < n$, or if $q=2$ and $1 < d < n-1$
(Lemma~\ref{lem: complement of gilbert not edge transitive part 1}),
or if $d=n-1$ and $n$ is odd
(Lemma~\ref{lem: complement of gilbert not edge transitive part 2}).
In these cases, the graph is also not distance-transitive, since distance-transitivity
implies edge-transitivity.
\end{enumerate}
This completes the proof of Theorem~\ref{thm: main result complement}.

\section{\texorpdfstring{The Lov\'{a}sz $\vartheta$ function}{The Lovasz theta function}}
\label{sec: lovasz theta function}

The Lov\'{a}sz $\vartheta$-function of a graph, denoted $\vartheta(\Gr{G})$, is a graph invariant
that exhibits deep connections to several fundamental graph invariants, including the independence
number $\indnum{\Gr{G}}$, clique number $\clnum{\Gr{G}}$, chromatic number $\chrnum{\Gr{G}}$,
fractional chromatic number $\fchrnum{\Gr{G}}$, Shannon capacity $\Theta(\Gr{G})$, and others
(see, e.g., \cite{BallaJS2024, Orel23}). For a comprehensive overview, we refer to \cite{Sason2024},
and for a unified framework encompassing these invariants, see \cite{Fritz21}. Owing to its central
role, the Lov\'{a}sz $\vartheta$-function has also been generalized in \cite{Roberson16} for the
study of graph homomorphisms. The present section provides necessary background on the Lov\'{a}sz
$\vartheta$-function for Section~\ref{sec: applications}.

\begin{definition}[Lov\'{a}sz $\vartheta$-function]
\label{definition: Lovasz theta function}
Let $\Gr{G}$ be a finite, simple, and undirected graph. Then, the {\em Lov\'{a}sz
$\vartheta$-function of $\Gr{G}$} is defined as
\begin{eqnarray}
\label{eq: Lovasz theta function}
\vartheta(\Gr{G}) \triangleq \min_{\bf{u}, \bf{c}} \, \max_{i \in \V{\Gr{G}}} \,
\frac1{\bigl( {\bf{c}}^{\mathrm{T}} {\bf{u}}_i \bigr)^2} \, ,
\end{eqnarray}
where the minimum is taken over all orthonormal representations $\{{\bf{u}}_i: i \in \V{\Gr{G}} \}$ of $\Gr{G}$
and all unit vectors ${\bf{c}}$.
The minimization on the right-hand side of \eqref{eq: Lovasz theta function} is taken over all ambient
dimensions; however, it suffices to restrict the vectors to $\mathbb{R}^N$, where $N = \card{\V{\Gr{G}}}$.
\end{definition}

The Lov\'{a}sz $\vartheta$-function can be expressed as a solution of an SDP problem.
Let $N \triangleq \card{\V{\Gr{G}}}$ and $\Adjacency = (A_{i,j})_{i,j \in \OneTo{N}}$.
Then, $\vartheta(\Gr{G})$ can be expressed as the solution of the following convex optimization problem:
\vspace*{0.3cm}
\begin{eqnarray}
\label{eq: SDP problem - Lovasz theta-function}
\mbox{\fbox{$
\begin{array}{l}
\text{maximize} \; \; \mathrm{Tr}({\bf{B}} \, {\bf{J}}_N)  \\
\text{subject to} \\
\begin{cases}
{\bf{B}} \succeq 0, \\
\mathrm{Tr}({\bf{B}}) = 1, \\
A_{i,j} = 1  \; \Rightarrow \;  B_{i,j} = 0, \quad i,j \in \OneTo{N}.
\end{cases}
\end{array}$}}
\end{eqnarray}

\medskip
The semidefinite programming (SDP) formulation in \eqref{eq: SDP problem - Lovasz theta-function}
implies that $\vartheta(\Gr{G})$ can be computed to any prescribed accuracy $r$ in time polynomial
in $N$ and $\log \frac{1}{r}$.

\begin{theorem}[Properties of the Lov\'{a}sz $\vartheta$-function]
\label{thm: properties of the Lovasz function}
Let $\Gr{G}$ be a finite, simple, and undirected graph on $N$ vertices. The following properties hold:
\begin{enumerate}
\item \label{item: Lovasz product}
\begin{align}
\vartheta(\Gr{G}) \; \vartheta(\CGr{G}) \geq N,
\end{align}
with an equality if $\Gr{G}$ is vertex-transitive or strongly regular.
\item \label{item: general Lovasz bounds}
The following inequalities hold:
\begin{align}
\label{eq1: 20.03.26}
\indnum{\Gr{G}} &\le \vartheta(\Gr{G}) \le \fchrnum{\CGr{G}} \le \chrnum{\CGr{G}} \\
\label{eq2: 20.03.26}
\clnum{\Gr{G}} &\le \vartheta(\CGr{G}) \le \fchrnum{\Gr{G}} \le \chrnum{\Gr{G}}.
\end{align}
\end{enumerate}
\end{theorem}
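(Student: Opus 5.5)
The statement has two parts: the product inequality $\vartheta(\Gr{G})\vartheta(\CGr{G})\ge N$ with equality for vertex-transitive or strongly regular graphs, and the sandwich inequalities \eqref{eq1: 20.03.26}--\eqref{eq2: 20.03.26}. I will work mainly with the SDP formulation \eqref{eq: SDP problem - Lovasz theta-function} and its dual. First I check that the orthonormal-representation definition \eqref{eq: Lovasz theta function} agrees with the SDP: from a representation $\{\mathbf{u}_i\}$ with handle $\mathbf{c}$ one builds a feasible dual matrix, and conversely via a Gram factorization. I will take this standard equivalence (Lov\'asz 1979) as known. I will also use the dual form
$$\vartheta(\Gr{G})=\min\{\lambda_{\max}(\mathbf{M}) : \mathbf{M} \text{ symmetric},\ M_{ij}=1 \text{ whenever } i=j \text{ or } \{i,j\}\notin \E{\Gr{G}}\}.$$

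For Item~\ref{item: general Lovasz bounds}, the lower bound $\alpha(\Gr{G})\le\vartheta(\Gr{G})$ follows by taking $\mathbf{B}=\frac{1}{|S|}\mathbf{1}_S\mathbf{1}_S^{\mathrm T}$ for a maximum independent set $S$. This matrix is feasible and gives objective value $|S|$.

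For $\vartheta(\Gr{G})\le\fchrnum{\CGr{G}}$, I take an optimal fractional clique cover of $\Gr{G}$, with weights $w_K\ge0$ on cliques $K$ of $\Gr{G}$ satisfying $\sum_{K\ni v}w_K\ge1$. For any feasible $\mathbf{B}$, the restriction of $\mathbf{B}$ to a clique $K$ is diagonal, because off-diagonal entries vanish on edges. Hence $\mathbf{1}_K^{\mathrm T}\mathbf{B}\mathbf{1}_K=\sum_{v\in K}B_{vv}$. Writing $\mathbf{1}=\sum_K w_K \mathbf{1}_K - \mathbf{r}$ directly is messy, so I would instead use the dual: the matrix
$$\mathbf{M}=\mathbf{J}-t\sum_K w_K\,(\mathbf{J}-\mathbf{1}_K\mathbf{1}_K^{\mathrm T})$$
with suitable scaling gives the bound. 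Alternatively I can cite Lov\'asz's theorem. The final inequality $\fchrnum{}\le\chrnum{}$ is immediate, since an integral coloring is a fractional one. Applying everything to $\CGr{G}$ gives \eqref{eq2: 20.03.26}.

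For the product inequality in Item~\ref{item: Lovasz product}, I take optimal representations: $(\mathbf{u}_i,\mathbf{c})$ for $\Gr{G}$ and $(\mathbf{v}_i,\mathbf{d})$ for $\CGr{G}$. The tensors $\mathbf{u}_i\otimes\mathbf{v}_i$ are orthonormal, since for $i\ne j$ one of the two factors is orthogonal. Therefore
$$1=\|\mathbf{c}\otimes\mathbf{d}\|^2\ge\sum_i(\mathbf{c}^{\mathrm T}\mathbf{u}_i)^2(\mathbf{d}^{\mathrm T}\mathbf{v}_i)^2\ge\frac{N}{\vartheta(\Gr{G})\vartheta(\CGr{G})}.$$

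For equality in the vertex-transitive case, I average an optimal dual matrix over $\Aut(\Gr{G})$. This gives a $\Gr{G}$-invariant optimal $\mathbf{B}$ for $\CGr{G}$ with constant diagonal $1/N$. From it I build a feasible representation for $\Gr{G}$ whose handle values are all equal, yielding $\vartheta(\Gr{G})\le N/\vartheta(\CGr{G})$. The strongly regular case is handled by an explicit computation of $\vartheta$ from the eigenvalues, through the Delsarte-type formula $\vartheta=-N\lambda_{\min}/(k-\lambda_{\min})$ applied to both the graph and its complement.

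The main obstacle is the equality case, in particular the symmetrization step and the strongly regular computation. I would rely on Lov\'asz's Theorem~8 and Theorem~9 rather than reprove them.
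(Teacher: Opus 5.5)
The paper states this theorem as classical background (Lov\'asz 1979; Knuth's sandwich theorem) and gives no proof, so your sketch is judged on its own. Most of it is sound: the independent-set matrix, the tensor-product argument for $\vartheta(\Gr{G})\,\vartheta(\CGr{G})\ge N$, and the symmetrization in the vertex-transitive case all work. For the last of these, an $\Aut(\Gr{G})$-invariant optimal $\mathbf{B}$ for $\CGr{G}$ has diagonal $1/N$ and constant row sums $\vartheta(\CGr{G})/N$. Its Gram vectors scaled by $\sqrt{N}$, with handle along their sum, give $\vartheta(\Gr{G})\le N/\vartheta(\CGr{G})$.

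\textbf{The gap: your certificate for $\vartheta(\Gr{G})\le\fchrnum{\CGr{G}}$.} Normalize the fractional clique cover so that $\sum_{K\ni v}w_K=1$ for every $v$ (allowed, since subsets of cliques are cliques), and put $t=\sum_K w_K$. Your matrix equals $(1-t^2)\mathbf{J}+t\sum_K w_K\mathbf{1}_K\mathbf{1}_K^{\mathrm T}$.
\begin{itemize}
\item Its diagonal entries are $1+t-t^2$.
\item Its entries on non-adjacent pairs, which lie in no common clique, are $1-t^2$.
\end{itemize}
So it violates the constraints $M_{ij}=1$ whenever $t>1$.

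No rescaling repairs this. Any matrix $\alpha\mathbf{J}+\beta\sum_K w_K\mathbf{1}_K\mathbf{1}_K^{\mathrm T}$ has non-edge entries $\alpha$ and diagonal entries $\alpha+\beta$. When $\Gr{G}$ is not complete, this forces $\alpha=1$, $\beta=0$, i.e.\ $\mathbf{M}=\mathbf{J}$, which only gives the trivial bound $N$.

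What is missing is a multiple of the identity, with the clique term entering with a negative sign:
\[
\mathbf{M}=\mathbf{J}+t\,\mathbf{I}-t\sum_K w_K\,\mathbf{1}_K\mathbf{1}_K^{\mathrm T}.
\]
This matrix has ones on the diagonal and on non-edges. With $a_K=\mathbf{1}_K^{\mathrm T}\mathbf{x}$, Cauchy--Schwarz gives $\mathbf{x}^{\mathrm T}(t\mathbf{I}-\mathbf{M})\mathbf{x}=t\sum_K w_K a_K^2-\bigl(\sum_K w_K a_K\bigr)^2\ge 0$, so $\lambda_{\max}(\mathbf{M})\le t$.

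Alternatively, the primal route you dropped is short. Let $\mathbf{B}$ be the Gram matrix of vectors $\mathbf{x}_v$; these are pairwise orthogonal within each clique. Setting $\mathbf{y}_K=\sum_{v\in K}\mathbf{x}_v$, one gets $\mathrm{Tr}(\mathbf{B}\mathbf{J})=\bigl\|\sum_K w_K\mathbf{y}_K\bigr\|^2\le t\sum_K w_K\|\mathbf{y}_K\|^2=t\,\mathrm{Tr}(\mathbf{B})=t$.

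\textbf{The strongly regular case needs one more step than you indicate.} Lov\'asz's Theorem~9 gives $\vartheta(\Gr{G})\le -N\lambda_{\min}/(k-\lambda_{\min})$ for every regular graph, but asserts equality only for edge-transitive graphs. So it does not by itself ``compute'' $\vartheta$ of a strongly regular graph. Instead, apply this upper bound to $\Gr{G}$ (eigenvalues $k>r>s$) and to $\CGr{G}$ (degree $N-k-1$, smallest eigenvalue $-1-r$), and multiply. Using $rs=\mu-k$ and $(k-r)(k-s)=N\mu$, one checks that
\[
\frac{-Ns}{k-s}\cdot\frac{N(1+r)}{N-k+r}=N .
\]
Combined with the product inequality, this forces $\vartheta(\Gr{G})\,\vartheta(\CGr{G})=N$, and shows both bounds are tight.
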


\begin{definition}[Maximum cut and surplus]
Let $\Gr{G} = (\V{\Gr{G}}, \E{\Gr{G}})$ be a graph. A \emph{maximum cut} of $\Gr{G}$ is a partition
of the vertex set $\V{\Gr{G}}$ into two disjoint subsets $\set{S}$ and $\set{T}$ that maximizes
the number of edges with one endpoint in $\set{S}$ and the other in $\set{T}$.
\begin{enumerate}
\item
The \emph{max-cut} of $\Gr{G}$, denoted by $\mathrm{mc}(\Gr{G})$, is the maximum number of such edges,
taken over all partitions of $\V{\Gr{G}}$. Equivalently, $\mathrm{mc}(\Gr{G})$ is the maximum number
of edges in a bipartite spanning subgraph of $\Gr{G}$.
\item
The \emph{surplus} of $\Gr{G}$ is defined as
\[
\mathrm{sp}(\Gr{G}) \triangleq \mathrm{mc}(\Gr{G}) - \frac{1}{2}\,\card{\E{\Gr{G}}}.
\]
\end{enumerate}
\end{definition}

\begin{theorem}[Lower bounds on the surplus, \cite{BallaJS2024}]
\label{theorem: surplus LB}
For every graph $\Gr{G}$, its max-cut satisfies
\begin{align}
\label{eq: surplus LB}
\mathrm{sp}(\Gr{G}) \ge \frac{1}{\pi} \cdot \frac{\card{\E{\Gr{G}}}}{\vartheta(\CGr{G}) - 1} \ge 0.
\end{align}
\end{theorem}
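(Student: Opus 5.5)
The statement immediately above is the surplus lower bound of Theorem~\ref{theorem: surplus LB}, $\mathrm{sp}(\Gr{G}) \ge \frac{1}{\pi}\cdot\frac{\card{\E{\Gr{G}}}}{\vartheta(\CGr{G})-1}$. I would prove it with the Goemans--Williamson random hyperplane rounding, applied to a vector embedding of $\Gr{G}$ that comes from an optimal solution for $\vartheta(\CGr{G})$.

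\textbf{Step 1: the vector configuration.} I would take an orthonormal-representation-type solution for $\CGr{G}$ in its dual form. By the standard reformulation of Lov\'asz, $\vartheta(\CGr{G})=\kappa$ is the smallest $\kappa$ for which there are unit vectors $\{\mathbf{x}_i\}_{i\in\V{\Gr{G}}}$ with $\inner{\mathbf{x}_i}{\mathbf{x}_j} = -\frac{1}{\kappa-1}$ whenever $\{i,j\}\in\E{\Gr{G}}$. This is the vector-chromatic-number characterization (Karger--Motwani--Sudan), and it follows from SDP duality applied to \eqref{eq: SDP problem - Lovasz theta-function}. If $\Gr{G}$ has an edge, then $\kappa>1$. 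If $\kappa=1$, then $\Gr{G}$ has no edges and the statement is trivial.

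\textbf{Step 2: rounding.} I would pick a uniformly random hyperplane through the origin with normal $\mathbf{g}$, and set $\set{S}=\{i:\inner{\mathbf{g}}{\mathbf{x}_i}\ge 0\}$. Each edge $\{i,j\}$ is cut with probability $\theta_{ij}/\pi$, where
\[
\theta_{ij}=\arccos\bigl(-\tfrac{1}{\kappa-1}\bigr)=\tfrac{\pi}{2}+\arcsin\bigl(\tfrac{1}{\kappa-1}\bigr).
\]
Hence the expected cut size is $\card{\E{\Gr{G}}}\bigl(\tfrac12+\tfrac1\pi\arcsin\tfrac{1}{\kappa-1}\bigr)$. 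Since $\arcsin t\ge t$ for $t\in[0,1]$, some cut attains at least $\tfrac12\card{\E{\Gr{G}}}+\frac{1}{\pi}\cdot\frac{\card{\E{\Gr{G}}}}{\kappa-1}$. Subtracting $\tfrac12\card{\E{\Gr{G}}}$ gives the first inequality. The second inequality, $\ge 0$, holds because $\kappa\ge\clnum{\Gr{G}}\ge1$ by \eqref{eq2: 20.03.26}.

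\textbf{Main obstacle.} The crux is Step 1. Specifically, I need to show that equality $-\frac{1}{\kappa-1}$, or at least the inequality $\inner{\mathbf{x}_i}{\mathbf{x}_j}\le -\frac{1}{\kappa-1}$, is attainable on edges with $\kappa=\vartheta(\CGr{G})$ exactly.

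I would get this from the dual SDP of \eqref{eq: SDP problem - Lovasz theta-function} for $\CGr{G}$. Its constraints force $B_{ij}=0$ on non-edges of $\Gr{G}$, and its dual reads: minimize $t$ subject to $t\Identity - \AllOne + \mathbf{Z}\succeq 0$, with $\mathbf{Z}$ supported on $\E{\Gr{G}}$. Set $\mathbf{M}=\frac{1}{t-1}(t\Identity-\AllOne+\mathbf{Z})\succeq 0$ at the optimum $t=\kappa$. Then
\[
M_{ii}=1, \qquad M_{ij}=\frac{-1+Z_{ij}}{\kappa-1} \text{ on edges}.
\]
A Gram factorization of $\mathbf{M}$ would give the vectors. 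The issue is that this only controls the off-diagonal entries up to the free variables $Z_{ij}$. So I would instead run the argument with the inequality form, and check that monotonicity of $\arccos$ keeps the bound valid: the cut probability only increases as inner products decrease. Pinning down the correct sign convention in this duality step is what I expect to require the most care.
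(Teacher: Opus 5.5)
Your proof is correct and is the Balla--Janzer--Sudakov argument. The paper gives no proof of its own and only cites that source, whose proof is exactly yours: the Karger--Motwani--Sudan characterization of $\vartheta(\CGr{G})$ as the strict vector chromatic number of $\Gr{G}$, then hyperplane rounding, then $\arcsin t \ge t$. Step~2 is fine. When $\Gr{G}$ has an edge, $\kappa \ge \clnum{\Gr{G}} \ge 2$, so $\frac{1}{\kappa-1}\in(0,1]$ and the inequality $\arcsin t\ge t$ applies. If $\Gr{G}$ is edgeless, the right-hand side is $0/0$, so that case is excluded rather than trivial.

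The one real error is in your justification of Step~1, and it creates the ``main obstacle'' you describe. You correctly note that the primal constraints $B_{ij}=0$ for $\vartheta(\CGr{G})$ sit on the non-edges of $\Gr{G}$. You then place $\mathbf{Z}$ on $\E{\Gr{G}}$, but the dual multipliers live exactly where the primal constraints are. So $\mathbf{Z}$ is supported on $\E{\CGr{G}}$, the non-edges of $\Gr{G}$.

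With your support the dual actually computes $\vartheta(\Gr{G})$: for $\Gr{G}=K_N$ it returns $t=1$ instead of $N$. With the correct support, $\mathbf{M}=\frac{1}{\kappa-1}(\kappa\Identity-\AllOne+\mathbf{Z})\succeq 0$ has $M_{ii}=1$ and $M_{ij}=-\frac{1}{\kappa-1}$ exactly for every $\{i,j\}\in\E{\Gr{G}}$. The free entries occur only on non-edges, which the rounding never uses, so a Gram factorization gives Step~1 directly.

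Your proposed fallback would not have rescued the wrong setup. There, $Z_{ij}$ on edges has no sign constraint, so even $\inner{\mathbf{x}_i}{\mathbf{x}_j}\le-\frac{1}{\kappa-1}$ is not available, and monotonicity of $\arccos$ does not help. Finally, the dual minimum is attained by Slater's condition, since $\mathbf{B}=\frac{1}{N}\Identity$ is strictly feasible. Alternatively, you can avoid attainment by running the argument with feasible $t>\kappa$ and letting $t\downarrow\kappa$.
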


\noindent
In particular, any upper bound on $\vartheta(\CGr{G})$ yields a corresponding (possibly weaker)
lower bound on the surplus $\mathrm{sp}(\Gr{G})$.

\medskip
The next theorem bounds the Lov\'{a}sz $\vartheta$-function in terms of the second largest eigenvalue
of the adjacency matrix of the graph.

\begin{theorem}[Bounds on the Lov\'{a}sz $\vartheta$-function of regular graphs, \cite{Sason23}]
\label{thm:bounds on the Lovasz function for regular graphs}
Let $\Gr{G}$ be a $\Delta$-regular graph of order $N$, which is neither complete nor empty.
Then, the following bounds hold for the Lov\'{a}sz $\vartheta$-function of $\Gr{G}$ and its
complement $\CGr{G}$:
\begin{enumerate}[(1)]
\item
\begin{eqnarray}
\label{eq:bounds on the Lovasz function for regular graphs 1}
\frac{N-\Delta+\Eigval{2}{\Gr{G}}}{1+\Eigval{2}{\Gr{G}}} \leq \vartheta(\Gr{G})
\leq -\frac{N \Eigval{N}{\Gr{G}}}{\Delta - \Eigval{N}{\Gr{G}}}.
\end{eqnarray}
\begin{itemize}
\item Equality holds in the leftmost inequality of \eqref{eq:bounds on the Lovasz function for regular graphs 1} if $\CGr{G}$
is both vertex-transitive and edge-transitive, or if $\Gr{G}$ is a strongly regular graph;
\item Equality holds in the rightmost inequality of \eqref{eq:bounds on the Lovasz function for regular graphs 1} if $\Gr{G}$
is edge-transitive, or if $\Gr{G}$ is a strongly regular graph.
\end{itemize}
\item
\begin{eqnarray}
\label{eq:bounds on the Lovasz function for regular graphs 2}
1 - \frac{\Delta}{\Eigval{N}{\Gr{G}}} \leq \vartheta(\CGr{G})
\leq \frac{N \bigl(1+\Eigval{2}{\Gr{G}}\bigr)}{N-\Delta+\Eigval{2}{\Gr{G}}}.
\end{eqnarray}
\begin{itemize}
\item Equality holds in the leftmost inequality of \eqref{eq:bounds on the Lovasz function for regular graphs 2}
if $\Gr{G}$ is both vertex-transitive and edge-transitive, or if $\Gr{G}$ is
a strongly regular graph;
\item Equality holds in the rightmost inequality of \eqref{eq:bounds on the Lovasz function for regular graphs 2}
if $\CGr{G}$ is edge-transitive, or if $\Gr{G}$ is a strongly regular graph.
\end{itemize}
\end{enumerate}
\end{theorem}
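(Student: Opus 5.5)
The plan is to prove only the upper bound in Part~(1), together with its equality cases, directly. Everything else then follows by passing to the complement and using Item~\ref{item: Lovasz product} of Theorem~\ref{thm: properties of the Lovasz function}.

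\textbf{Step 1: the Hoffman-type upper bound.} I would use the dual of \eqref{eq: SDP problem - Lovasz theta-function}:
\[
\vartheta(\Gr{G}) = \min\{\lambda_{\max}(\AllOne + \mathbf{T}) : \mathbf{T} \text{ symmetric},\ T_{i,j} = 0 \text{ unless } \{i,j\} \in \E{\Gr{G}}\}.
\]
Take $\mathbf{T} = -t\,\Adjacency$ with $t > 0$. Since $\Gr{G}$ is $\Delta$-regular, $\AllOne$ and $\Adjacency$ are simultaneously diagonalizable, with $\mathbf{1}$ as a common eigenvector.

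The eigenvalues of $\AllOne - t\Adjacency$ are then $N - t\Delta$ on $\mathbf{1}$, and $-t\lambda_i(\Gr{G})$ for $i \ge 2$. Their maximum is $\max\{N - t\Delta, -t\lambda_N\}$, where $\lambda_N < 0$ because $\Gr{G}$ is not empty. Choosing $t = N/(\Delta - \lambda_N)$ equalizes the two terms and gives the bound $-N\lambda_N/(\Delta - \lambda_N)$.

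\textbf{Step 2: equality when $\Gr{G}$ is edge-transitive.} Take an optimal $\mathbf{T}$ and average $P_\varphi \mathbf{T} P_\varphi^{-1}$ over $\varphi \in \Aut(\Gr{G})$. The averaged matrix is still feasible, and by convexity of $\lambda_{\max}$ it is still optimal. Edge-transitivity forces the averaged $\mathbf{T}$ to be constant on edges, i.e.\ $\mathbf{T} = -t\Adjacency$. The one-parameter minimization in Step~1 is then exact, so equality holds.

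For the strongly regular case, $\Adjacency$ has only three distinct eigenvalues. I would exhibit a primal feasible matrix $\mathbf{B}$ in the span of $\I{N}$, $\AllOne$ and $\Adjacency$ — namely, a suitably scaled primitive idempotent combination vanishing on edges — whose objective value matches the bound.

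\textbf{Step 3: the remaining three bounds.} The complement $\CGr{G}$ is $(N-1-\Delta)$-regular. Its eigenvalues on $\mathbf{1}^{\perp}$ are $-1-\lambda_i(\Gr{G})$, so $\lambda_N(\CGr{G}) = -1-\lambda_2(\Gr{G})$. Applying Step~1 to $\CGr{G}$, which is nonempty because $\Gr{G}$ is not complete, gives
\[
\vartheta(\CGr{G}) \le \frac{N(1+\lambda_2)}{(N-1-\Delta) + 1 + \lambda_2} = \frac{N(1+\lambda_2)}{N - \Delta + \lambda_2}.
\]
This is the right inequality of \eqref{eq:bounds on the Lovasz function for regular graphs 2}. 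It holds with equality if $\CGr{G}$ is edge-transitive, by Step~2, and if $\Gr{G}$ is strongly regular, since then $\CGr{G}$ is strongly regular too.

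Next, Item~\ref{item: Lovasz product} gives $\vartheta(\Gr{G}) \ge N/\vartheta(\CGr{G})$, which yields the left inequality of \eqref{eq:bounds on the Lovasz function for regular graphs 1}. Equality holds when both $\vartheta(\Gr{G})\vartheta(\CGr{G}) = N$ (valid if $\Gr{G}$, equivalently $\CGr{G}$, is vertex-transitive, or $\Gr{G}$ is strongly regular) and the bound on $\vartheta(\CGr{G})$ is tight. In the same way, $\vartheta(\CGr{G}) \ge N/\vartheta(\Gr{G})$ combined with Step~1 gives $1 - \Delta/\lambda_N$, together with its stated equality conditions.

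I need to check that all denominators are positive. We have $\Delta - \lambda_N > 0$. Also $N - \Delta + \lambda_2 \ge N - \Delta - 1 + 1 > 0$, using $\lambda_2 \ge -1$ for a noncomplete graph.

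\textbf{Main obstacle.} I expect the hard part to be the strongly regular equality case in Step~2, where an explicit primal certificate must be written down. The symmetrization argument itself is routine.
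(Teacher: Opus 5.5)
The paper gives no proof of this statement; it is imported from \cite{Sason23}. Your Steps~1--3 are the classical derivation: Lov\'asz's Hoffman-type bound from the dual SDP, symmetrization over $\Aut(\Gr{G})$, the relation $\Eigval{N}{\CGr{G}}=-1-\Eigval{2}{\Gr{G}}$, and $\vartheta(\Gr{G})\,\vartheta(\CGr{G})\ge N$. They correctly yield all four inequalities and every equality claim that rests on vertex- or edge-transitivity.

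Two small repairs are needed. First, the symmetrized matrix is $\beta\,\Adjacency$ for some $\beta\in\Reals$, so you must exclude $\beta\ge 0$. This is immediate: the eigenvalue $N+\beta\Delta$ on $\mathbf{1}$ would be at least $N$, whereas the optimum is at most the bound, which is $<N$ because $\Delta>0$. Second, the denominator $1+\Eigval{2}{\Gr{G}}$ requires $\Eigval{2}{\Gr{G}}>-1$, which your ``$\ge -1$'' does not give. Use instead $\Eigval{2}{\Gr{G}}\ge 0$, which follows from interlacing with the subgraph induced on two nonadjacent vertices.

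The genuine gap is the strongly regular case, which you only announce. What is missing is a parameter identity, not a clever certificate. Suppose adjacent (resp.\ nonadjacent) vertices of $\Gr{G}$ have $a$ (resp.\ $c$) common neighbours, and put $r=\Eigval{2}{\Gr{G}}$, $s=\Eigval{N}{\Gr{G}}$. Since $\Gr{G}$ is neither complete nor empty, $r\ge 0>s$ are the two roots of $x^2-(a-c)x-(\Delta-c)$. Hence $r+s=a-c$ and $rs=c-\Delta$. Together with the count $c(N-\Delta-1)=\Delta(\Delta-a-1)$, this gives
\[
(N-\Delta+r)(\Delta-s)+Ns(1+r)=c(N-\Delta-1)-\Delta(\Delta-a-1)=0 .
\]
Thus the two bounds in Part~(1) coincide, forcing equality in both. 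The bounds in Part~(2) are $N$ divided by those in Part~(1), so they coincide too. This settles every strongly regular equality claim at once. It even reproves the strongly regular case of Item~\ref{item: Lovasz product} of Theorem~\ref{thm: properties of the Lovasz function}, rather than relying on it.

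Your certificate idea leads to the same identity. The matrix $\mathbf{B}=\frac1N\bigl(\I{N}+\frac{1}{1+r}(\AllOne-\I{N}-\Adjacency)\bigr)$ is feasible for every noncomplete regular graph and has value $\frac{N-\Delta+r}{1+r}$, which is in fact a direct proof of the left inequality of Part~(1). It reaches $-\frac{Ns}{\Delta-s}$ exactly when the identity above holds.
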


Since the Lov\'{a}sz $\vartheta$-function is computable in polynomial time via SDP,
the sandwich inequalities in \eqref{eq1: 20.03.26} and \eqref{eq2: 20.03.26} and the
lower bound on the surplus of a graph in Theorem~\ref{theorem: surplus LB} are particularly
useful since they provide bounds on graph invariants whose computation is, in general, NP-hard.

\section{Application}
\label{sec: applications}

While the algorithm for solving the SDP problem in \eqref{eq: SDP problem - Lovasz theta-function}
is polynomial in $\card{\V{\Gr{G}}}$, for the Gilbert graph $\Gilbert{q,n,d}$ we have
$\card{\V{\Gilbert{q,n,d}}} = q^n$. Hence, although the algorithm is polynomial in the number of vertices,
its complexity is exponential in $n$.
However, for any set of parameters $(q,n,d)$ such that $\Gilbert{q,n,d}$ or its complement is edge-transitive,
it is possible to compute the exact value of the Lov\'{a}sz $\vartheta$-function of both $\Gilbert{q,n,d}$ and its
complement $\overline{\Gilbert{q,n,d}}$.
All Gilbert graphs are vertex-transitive, since they are Cayley graphs. Thus, by
Theorem~\ref{thm: properties of the Lovasz function}(\ref{item: Lovasz product}), for every choice of parameters
$(q,n,d)$, we have $\vartheta(\Gilbert{q,n,d}) \cdot \vartheta(\overline{\Gilbert{q,n,d}}) = q^n$.

Combining the equality conditions in Theorem~\ref{thm:bounds on the Lovasz function for regular graphs} with the
complete classification of edge-transitive Gilbert graphs and their complements from Theorems~\ref{thm: main result}
and \ref{thm: main result complement}, we obtain exact closed-form expressions for the Lov\'{a}sz $\vartheta$-function
in the following cases, summarized in the next theorem.
\begin{theorem}
\label{thm: Lovasz theta function of Gilbert graphs}
Let $\Gr{G} = \Gilbert{q,n,d}$ be a Gilbert graph with regularity $\Delta \triangleq \sum_{i=1}^{d-1} \binom{n}{i} (q-1)^{i}$. Then
\begin{enumerate}
\item If either $d = 2$, or $(q,d) = (2,3)$, or $(q,d) = (2,n)$, then
\begin{align}
\vartheta(\Gr{G}) &= -\frac{q^n \, \Eigval{min}{\Gr{G}}}{\Delta - \Eigval{min}{\Gr{G}}} \\
\vartheta(\CGr{G}) &= 1 - \frac{\Delta}{\Eigval{min}{\Gr{G}}},
\end{align}
where $\Eigval{min}{\Gr{G}}$ is the smallest eigenvalue of the adjacency matrix of $\Gr{G}$.
\item If either $(q,d)=(2,n-1)$ for an even $n$, or $d = n$, then
\begin{align}
\vartheta(\Gr{G}) &= \frac{q^n - \Delta + \Eigval{2}{\Gr{G}}}{1 + \Eigval{2}{\Gr{G}}} \\
\vartheta(\CGr{G}) &= \frac{q^n (1 + \Eigval{2}{\Gr{G}})}{q^n - \Delta + \Eigval{2}{\Gr{G}}},
\end{align}
where $\Eigval{2}{\Gr{G}}$ is the second largest eigenvalue of the adjacency matrix of $\Gr{G}$.
\end{enumerate}
\end{theorem}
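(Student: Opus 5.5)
The argument combines the equality cases of Theorem~\ref{thm:bounds on the Lovasz function for regular graphs} with the classifications in Theorems~\ref{thm: main result} and~\ref{thm: main result complement}. It also uses the identity $\vartheta(\Gr{G})\,\vartheta(\CGr{G}) = q^n$, which holds by Theorem~\ref{thm: properties of the Lovasz function}(\ref{item: Lovasz product}) because $\Gr{G}$ is a vertex-transitive Cayley graph. Throughout, $\Gr{G}=\Gilbert{q,n,d}$ is $\Delta$-regular on $N=q^n$ vertices. For $2\le d\le n$ it is neither complete nor empty: weight-$1$ vectors are neighbours of $\mathbf{0}$, and the all-ones vector of weight $n\ge d$ is not. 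Hence Theorem~\ref{thm:bounds on the Lovasz function for regular graphs} applies, with $\Eigval{N}{\Gr{G}}=\Eigval{min}{\Gr{G}}$.

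For Part~1, the hypothesis is exactly Item~3 of Theorem~\ref{thm: main result}, so $\Gr{G}$ is edge-transitive. It is also vertex-transitive, being a Cayley graph. The rightmost inequality in \eqref{eq:bounds on the Lovasz function for regular graphs 1} is tight for edge-transitive $\Gr{G}$, which gives
\[
\vartheta(\Gr{G}) = -\frac{q^n\,\Eigval{min}{\Gr{G}}}{\Delta-\Eigval{min}{\Gr{G}}}.
\]
The leftmost inequality in \eqref{eq:bounds on the Lovasz function for regular graphs 2} is tight when $\Gr{G}$ is vertex- and edge-transitive, which gives $\vartheta(\CGr{G}) = 1-\Delta/\Eigval{min}{\Gr{G}}$. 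As a consistency check, the product of the two expressions is $q^n$, in agreement with the vertex-transitivity identity. Alternatively, the second formula follows from the first by dividing $q^n$ by $\vartheta(\Gr{G})$.

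For Part~2, the hypothesis is exactly Item~1 of Theorem~\ref{thm: main result complement}, so $\CGr{G}$ is edge-transitive. Since $\CGr{G}$ is also a Cayley graph, it is vertex-transitive. The rightmost inequality in \eqref{eq:bounds on the Lovasz function for regular graphs 2} is tight when $\CGr{G}$ is edge-transitive, which yields the stated formula for $\vartheta(\CGr{G})$ in terms of $\Eigval{2}{\Gr{G}}$. The leftmost inequality in \eqref{eq:bounds on the Lovasz function for regular graphs 1} is tight when $\CGr{G}$ is vertex- and edge-transitive, which yields $\vartheta(\Gr{G})$. Again the two formulas multiply to $q^n$, as they must.

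The only point needing care is checking the side conditions of Theorem~\ref{thm:bounds on the Lovasz function for regular graphs}. These are regularity, non-complete and non-empty, and that the denominators are nonzero. The denominator $\Delta-\Eigval{min}{\Gr{G}}$ is positive because $\Gr{G}$ has an edge, so $\Eigval{min}{\Gr{G}}<0$. The denominator $1+\Eigval{2}{\Gr{G}}$ is positive because $\Gr{G}$ is not complete, so $\Eigval{2}{\Gr{G}}\ge 0$. This holds even when $\Gr{G}$ is disconnected; for instance $\Gilbert{2,n,n}$ has $\CGr{G}$ a perfect matching, and then $\Eigval{2}{\Gr{G}}=\Delta$. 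That case is still covered, because the cited bounds hold for regular graphs without a connectivity assumption.
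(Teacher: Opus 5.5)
Your proposal is correct and takes the same route as the paper: vertex-transitivity of the Cayley graphs gives $\vartheta(\Gr{G})\,\vartheta(\CGr{G})=q^n$, and the equality cases of Theorem~\ref{thm:bounds on the Lovasz function for regular graphs} apply thanks to the classifications in Theorems~\ref{thm: main result} and~\ref{thm: main result complement}. One aside in your last paragraph is wrong, though harmless: $\Gilbert{q,n,d}$ is always connected for $d\ge 2$ (its connection set contains every weight-$1$ vector), and for the cocktail-party graph $\Gilbert{2,n,n}$ one has $\Eigval{2}{\Gr{G}}=0$, not $\Delta$; the disconnected graph there is $\CGr{G}$, a perfect matching, and the only point to check is that the edge-transitive equality case and the relation $\Eigval{N}{\CGr{G}}=-1-\Eigval{2}{\Gr{G}}$ (valid for any non-complete regular $\Gr{G}$) need no connectivity.
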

The smallest and second largest eigenvalues $\lambda_{min}$ and $\lambda_{2}$ are obtained from
equalities \eqref{eq: 20.03.26}. The complexity for computing
these eigenvalues is $O(n^2)$, which is a significant improvement over the general SDP-based approach,
whose complexity is exponential in $n$ for this family of graphs.

We next illustrate an application of knowing the exact value of the Lov\'{a}sz $\vartheta$-function,
as provided by Theorem~\ref{thm: Lovasz theta function of Gilbert graphs}.

\subsection*{The isomorphism problem}
To determine whether two graphs are non-isomorphic, one may compare various graph invariants computed for each graph.
Some invariants, such as the independence number or the chromatic number, are NP-hard to compute.
On the other hand, the spectra of matrix representations of a graph, such as the adjacency, Laplacian, signless Laplacian,
and normalized Laplacian matrices, can be computed in polynomial time; however, there exist cospectral non-isomorphic graphs.

Another useful invariant for distinguishing graphs is the Lov\'{a}sz $\vartheta$-function.
In contrast to spectral invariants, knowing the \emph{exact} values of the Lov\'{a}sz $\vartheta$-function (of a graph or its complement)
can be decisive for the isomorphism problem: if these values differ for two graphs, then the graphs are necessarily non-isomorphic.
In contrast, comparing only upper and lower bounds on the $\vartheta$-function may be inconclusive, as different bounds do not imply non-isomorphism.

This distinction is illustrated by concrete examples.
In \cite[Example~3]{SasonKHB2025}, a pair of non-isomorphic graphs is exhibited that share the same adjacency, Laplacian,
signless Laplacian, and normalized Laplacian spectra, and also have identical independence and clique numbers, yet have
distinct Lov\'{a}sz $\vartheta$-values.
More generally, \cite[Theorem~4.19]{Sason2024} shows that for every even integer $n \ge 14$, there exists a pair of connected,
irregular, non-isomorphic graphs on $n$ vertices that share all four spectra mentioned above, as well as identical independence,
clique, and chromatic numbers, but are distinguished by their Lov\'{a}sz $\vartheta$-function values.

\section*{Acknowledgements}
The authors are grateful to Ronny Roth, Raphael Yuster, and the anonymous reviewer for carefully reading an earlier version of 
the manuscript and providing valuable comments that improved its presentation.


\begin{thebibliography}{99}

\bibitem{AffifChaoucheB2006}
F. Affif Chaouche, A. Berrachedi:
Automorphisms group of generalized Hamming graphs.
\emph{Electron. Notes Discrete Math.} \textbf{24}, 9--15 (2006).
\url{https://doi.org/10.1016/j.endm.2006.06.003}

\bibitem{BallaJS2024}
I. Balla, O. Janzer, B. Sudakov:
On MaxCut and the Lov\'asz theta function.
\emph{Proc. Amer. Math. Soc.} \textbf{152}(5), 1871--1879 (2024).
\url{https://doi.org/10.1090/proc/16675}

\bibitem{BrouwerCN1989}
A. E. Brouwer, A. M. Cohen, A. Neumaier:
Theory of distance-regular graphs.
In: \emph{Distance-Regular Graphs},
Ergebnisse der Mathematik und ihrer Grenzgebiete, vol.~18,
pp.~126--166. Springer, Berlin/Heidelberg (1989).
\url{https://doi.org/10.1007/978-3-642-74341-2_4}

\bibitem{BrouwerHaemers2012}
A. E. Brouwer, W. H. Haemers:
\emph{Spectra of Graphs}.
Universitext. Springer, New York (2012).
\url{https://doi.org/10.1007/978-1-4614-1939-6}

\bibitem{CioabaM22}
S. M. Cioab\v{a}, M. R. Murty:
\emph{A First Course in Graph Theory and Combinatorics},
2nd ed.
Texts and Readings in Mathematics, vol.~55.
Springer, Singapore (2022).
\url{https://doi.org/10.1007/978-981-19-0957-3}

\bibitem{Coleman2011}
R. Coleman:
On Krawtchouk polynomials.
HAL preprint, hal-00554167 (2011).
\url{https://hal.science/hal-00554167}

\bibitem{Delsarte1973}
P. Delsarte:
\emph{An Algebraic Approach to the Association Schemes of Coding Theory}.
Philips Res. Rep. Suppl., no.~10, vi+97~pp. (1973).

\bibitem{DelsarteLevenshtein1998}
P. Delsarte, V. I. Levenshtein:
Association schemes and coding theory.
\emph{IEEE Trans. Inform. Theory} \textbf{44}(6), 2477--2504 (1998).
\url{https://doi.org/10.1109/18.720545}

\bibitem{Fritz21}
T. Fritz:
A unified construction of semiring-homomorphic graph invariants.
\emph{J. Algebraic Combin.} \textbf{54}(3), 693--718 (2021).
\url{https://doi.org/10.1007/s10801-020-00983-y}

\bibitem{GodsilRoyle2001}
C. Godsil, G. Royle:
\emph{Algebraic Graph Theory}.
Graduate Texts in Mathematics, vol.~207.
Springer, New York (2001).
\url{https://doi.org/10.1007/978-1-4613-0163-9}

\bibitem{JiangVardy2004}
T. Jiang, A. Vardy:
Asymptotic improvement of the Gilbert--Varshamov bound on the size of
binary codes.
\emph{IEEE Trans. Inform. Theory} \textbf{50}(8), 1655--1664 (2004).
\url{https://doi.org/10.1109/TIT.2004.831751}

\bibitem{Knuth94}
D. E. Knuth:
The sandwich theorem.
\emph{Electron. J. Combin.} \textbf{1}(1), Article A1, 48~pp. (1994).
\url{https://doi.org/10.37236/1193}

\bibitem{LidlNiederreiter96}
R. Lidl, H. Niederreiter:
\emph{Finite Fields}, 2nd ed.
Encyclopedia of Mathematics and its Applications, vol.~20.
Cambridge University Press, Cambridge (1996).
\url{https://doi.org/10.1017/CBO9780511525926}

\bibitem{LiZW2023}
Y. Li, J. Zhang, M. Wang:
The square of some generalized Hamming graphs.
\emph{Mathematics} \textbf{11}(11), Article 2487, 21~pp. (2023).
\url{https://doi.org/10.3390/math11112487}

\bibitem{Lovasz79}
L. Lov\'{a}sz:
On the Shannon capacity of a graph.
\emph{IEEE Trans. Inform. Theory} \textbf{25}(1), 1--7 (1979).
\url{https://doi.org/10.1109/TIT.1979.1055985}

\bibitem{MacWilliamsSloane77}
F. J. MacWilliams, N. J. A. Sloane:
\emph{The Theory of Error-Correcting Codes}.
North-Holland Mathematical Library, vol.~16.
North-Holland, Amsterdam (1977).
\url{https://doi.org/10.1016/S0924-6509(08)X7030-8}

\bibitem{McElieceRR1978}
R. J. McEliece, E. R. Rodemich, H. C. Rumsey, Jr.:
The Lov\'asz bound and some generalizations.
\emph{J. Combin. Inform. Syst. Sci.} \textbf{3}(3), 134--152 (1978).
\url{https://ipnpr.jpl.nasa.gov/progress_report2/42-45/45I.PDF}

\bibitem{Mirafzal2021}
S. M. Mirafzal:
Some remarks on the square graph of the hypercube.
arXiv preprint arXiv:2101.01615 (2021).
\url{https://doi.org/10.48550/arXiv.2101.01615}

\bibitem{Orel23}
M. Orel:
The core of a complementary prism.
\emph{J. Algebraic Combin.} \textbf{58}(3), 589--609 (2023).
\url{https://doi.org/10.1007/s10801-023-01236-4}

\bibitem{Polyanskiy2019}
Y. Polyanskiy:
Hypercontractivity of spherical averages in Hamming space.
\emph{SIAM J. Discrete Math.} \textbf{33}(2), 731--754 (2019).
\url{https://doi.org/10.1137/15M1046575}

\bibitem{QianWX2022}
C. Qian, Y. Wu, Y. Xiong:
Phylogeny numbers of generalized Hamming graphs.
\emph{Bull. Malays. Math. Sci. Soc.} \textbf{45}(5), 2733--2744 (2022).
\url{https://doi.org/10.1007/s40840-022-01338-5}

\bibitem{Roberson16}
D. E. Roberson:
Conic formulations of graph homomorphisms.
\emph{J. Algebraic Combin.} \textbf{43}(4), 877--913 (2016).
\url{https://doi.org/10.1007/s10801-016-0665-y}

\bibitem{Sason23}
I. Sason:
Observations on Lov\'asz $\vartheta$-function, graph capacity, eigenvalues,
and strong products.
\emph{Entropy} \textbf{25}(1), Article 104, 40~pp. (2023).
\url{https://doi.org/10.3390/e25010104}

\bibitem{Sason2024}
I. Sason:
Observations on graph invariants with the Lov\'asz $\vartheta$-function.
\emph{AIMS Math.} \textbf{9}(6), 15385--15468 (2024).
\url{https://doi.org/10.3934/math.2024747}

\bibitem{Sason2025}
I. Sason:
An example showing that Schrijver's $\vartheta$-function need not upper
bound the Shannon capacity of a graph.
\emph{AIMS Math.} \textbf{10}(7), 15294--15301 (2025).
\url{https://doi.org/10.3934/math.2025685}

\bibitem{SasonKHB2025}
I. Sason, N. Krupnik, S. Hamud, A. Berman:
On spectral graph determination.
\emph{Mathematics} \textbf{13}(4), Article 549 (2025).
\url{https://doi.org/10.3390/math13040549}

\bibitem{Tolhuizen1997}
L. M. Tolhuizen:
The generalized Gilbert--Varshamov bound is implied by Tur\'{a}n's theorem.
\emph{IEEE Trans. Inform. Theory} \textbf{43}(5), 1605--1606 (1997).
\url{https://doi.org/10.1109/18.623158}

\bibitem{WangCLL2026}
Y. Wang, M. Cao, Z. Lv, M. Lu:
Treewidth of generalized Hamming graph, bipartite Kneser graph and
generalized Petersen graph.
\emph{Electron. J. Combin.} \textbf{33}(1), Article P1.7 (2026).
\url{https://doi.org/10.37236/12892}

\bibitem{YeZLWYMa2023}
Z. Ye, H. Zhang, R. Li, J. Wang, G. Yan, Z. Ma:
Improving the Gilbert--Varshamov bound by graph spectral method.
\emph{CSIAM Trans. Appl. Math.} \textbf{4}(1), 1--12 (2023).
\url{https://doi.org/10.4208/csiam-am.SO-2021-0024}

\end{thebibliography}
\end{document}